\newcommand{\prox}{\mathrm{prox}}
\newcommand{\ngb}{ u\in\calN^v_{\text{in}}\cup \{v\} }
\newcommand{\slfrac}[2]{\left.#1\middle/#2\right.}
\newcommand{\epi}{\mathrm{epi}}
\theoremstyle{definition}
\newtheorem{definition}{Definition} 
\newtheorem{assumption}{Assumption} 
\newtheorem{remark}{Remark} 
\DeclareMathAlphabet\mathbfcal{OMS}{cmsy}{b}{n}
\def\BibTeX{{\rm B\kern-.05em{\sc i\kern-.025em b}\kern-.08em
    T\kern-.1667em\lower.7ex\hbox{E}\kern-.125emX}}
\newcommand{\bDelta}{\bm{\Delta}}
\newcommand{\bLambda}{\bm{\Lambda}}
\newcommand{\bPsi}{\bm{\Psi}}
\newcommand{\bdelta}{\bm{\delta}}
\newcommand{\bgamma}{\bm{\gamma}}
\newcommand{\blambda}{\bm{\lambda}}
\newcommand{\bpsi}{\bm{\psi}}
\newcommand{\bpi}{\bm{\pi}}
\newcommand{\bnabla}{\boldsymbol{\nabla}}
\newcommand{\tlbG}{\tilde{\bG}}
\newcommand{\tlbX}{\tilde{\bX}}
\newcommand{\tlbg}{\tilde{\bg}}
\newcommand{\bA}{\mathbf{A}}
\newcommand{\bD}{\mathbf{D}}
\newcommand{\bE}{\mathbf{E}}
\newcommand{\bF}{\mathbf{F}}
\newcommand{\bG}{\mathbf{G}}
\newcommand{\bH}{\mathbf{H}}
\newcommand{\bI}{\mathbf{I}}
\newcommand{\bL}{\mathbf{L}}
\newcommand{\bM}{\mathbf{M}}
\newcommand{\bO}{\mathbf{O}}
\newcommand{\bP}{\mathbf{P}}
\newcommand{\bQ}{\mathbf{Q}}
\newcommand{\bR}{\mathbf{R}}
\newcommand{\bS}{\mathbf{S}}
\newcommand{\bT}{\mathbf{T}}
\newcommand{\bW}{\mathbf{W}}
\newcommand{\bX}{\mathbf{X}}
\newcommand{\bZ}{\mathbf{Z}}
\newcommand{\bU}{\mathbf{U}}
\newcommand{\bC}{\mathbf{C}}
\newcommand{\bB}{\mathbf{B}}
\newcommand{\ba}{\mathbf{a}}
\newcommand{\bb}{\mathbf{b}}
\newcommand{\bc}{\mathbf{c}}
\newcommand{\bt}{\mathbf{t}}
\newcommand{\bd}{\mathbf{d}}
\newcommand{\mbf}{\mathbf{f}}
\newcommand{\bg}{\mathbf{g}}
\newcommand{\bh}{\mathbf{h}}
\newcommand{\bn}{\mathbf{n}}
\newcommand{\bu}{\mathbf{u}}
\newcommand{\bw}{\mathbf{w}}
\newcommand{\bx}{\mathbf{x}}
\newcommand{\by}{\mathbf{y}}
\newcommand{\bz}{\mathbf{z}}
\newcommand{\bbR}{\mathbb{R}}
\newcommand{\calE}{\mathcal{E}}
\newcommand{\calG}{\mathcal{G}}
\newcommand{\calN}{\mathcal{N}}
\newcommand{\calV}{\mathcal{V}}
\newcommand{\calO}{\mathcal{O}}
\newcommand{\tlf}{\tilde{f}}
\newcommand{\barbx}{\bar{\bx}}
\newcommand{\barby}{\bar{\by}}
\newcommand{\tlbx}{\tilde{\bx}}
\newcommand{\bzero}{\mathbf{0}}
\newcommand{\bone}{\mathbf{1}}
\newcommand{\suml}{\sum\limits}
\newcommand{\minl}{\min\limits}
\newcommand{\maxl}{\max\limits}
\newcommand{\bigcapl}{\bigcap\limits}
\newcommand{\tr}{\text{Tr}}
\newcommand{\tth}{\text{th}}
\newtheorem{theorem}{Theorem}
\newtheorem{lemma}{Lemma}
\newcommand{\eg}{{{\em e.g.}}}
\newcommand{\nf}[1]{\|#1\|_\mathrm{F}} 
\newcommand{\nt}[1]{\|#1\|_2}
\newcommand{\sbjt}{\mbox{{s.t.}}}
\DeclareMathOperator*{\argmin}{arg\,min}
\newcommand{\brond}{{\mbox{\boldmath $\partial$}}}
\begin{document}
\title{Double Averaging and Gradient Projection:\\Convergence Guarantees for Decentralized Constrained Optimization}
\author{Firooz Shahriari-Mehr, 
and Ashkan Panahi%
\thanks{
This work has been submitted to the IEEE for possible publication. Copyright may be transferred without notice, after which this version may no longer be accessible.
This work was partially supported by the Wallenberg AI, Autonomous Systems and Software Program (WASP) funded by the Knut and Alice Wallenberg Foundation. 
A short version of this work was presented at the 60th IEEE Conference on Decision and Control, Austin, Texas, USA, December 2021~\cite{firoozCDC}.}
\thanks{The authors are with the department of Computer Science and Engineering, Chalmers University of Technology, G\"oteborg, Sweden (e-mails: \{Firooz, Ashkan.panahi\} @chalmers.se).  }
}

\maketitle

\begin{abstract}
We consider a generic decentralized constrained optimization problem over static, directed communication networks, where each agent has exclusive access to only one convex, differentiable, local objective term and one convex constraint set. 
For this setup, we propose a novel decentralized algorithm, called DAGP (Double Averaging and Gradient Projection), based on local gradients, projection onto local constraints, and local averaging.  We achieve global optimality through a novel distributed tracking technique we call distributed null projection. Further, we show that DAGP can be used to solve unconstrained problems with non-differentiable objective terms with a problem reduction scheme.
Assuming only smoothness of the objective terms, we study the convergence of DAGP and establish 
 sub-linear rates of convergence in terms of feasibility, consensus, and optimality, with no extra assumption (e.g. strong convexity). For  the analysis, we forego the difficulties of selecting Lyapunov functions  by proposing a new methodology of convergence analysis in optimization problems, which we refer to as aggregate lower-bounding. To demonstrate the generality of this method, we also provide an alternative convergence proof for the standard gradient descent algorithm with smooth functions. Finally, we present numerical results demonstrating the effectiveness of our proposed method in both constrained and unconstrained problems. In particular, we propose a distributed scheme by DAGP for the optimal transport problem with superior performance and speed.

\end{abstract}                        

\begin{IEEEkeywords}
Constrained optimization, convergence analysis, convex optimization, distributed optimization, decentralized optimal transport, multi-agent systems.
\end{IEEEkeywords}

\section{Introduction}
\label{sec: Intro}
A wide variety of applications involving multi-agent systems concern distributed optimization problems. In these applications, the system consists of  $M$ interacting agents designed to efficiently minimize a global objective function $f(\bx)$ of a common set of $m$ optimization variables $\bx\in\bbR^m$ in the presence of a global constraint set $S$. In many cases, the global function and constraint are only partially available to each agent. A fairly general framework in such problems is an optimization problem of the following form
\begin{equation}
\label{eq: dec_constrained}
     \minl_{\bx \in \bbR^{m}}  \;\;  \frac{1}{M}\suml_{v=1}^{M} f^v(\bx) \qquad
     \sbjt  \quad \bx\in \bigcapl_{v=1}^{M} S^v,
\end{equation}
where the local objective functions $f^v(\bx)$ and the local constraint sets $S^v$ are exclusively available to their corresponding agent. 
There are multiple motivating applications for this setup.
Parameter estimation and resource allocation in sensor networks~\cite{bazerque2009distributed, kar2012distributed},~\cite[chapter~10]{lesser2003distributed}, 
fitting models to big datasets~\cite{cortes1995support, bishop2006pattern}, 
smart grid control~\cite{alizadeh2012demand},
and optimal transport~\cite{torres2021survey} are but a few examples.

The standard single-machine optimization algorithms such as projected gradient descent can solve many instances of the problem in \eqref{eq: dec_constrained}. However, in large-scale or privacy-sensitive applications,
only distributed optimization methods 
are applicable.  In a distributed setup, two different architectures can be considered: centralized, where a central node (agent) coordinates all other agents (workers), 
and the decentralized architecture, where there is no central coordinator. To avoid  master failure and bottleneck problems, decentralized architectures have attracted more attention, in the past decade~\cite{rabbat2018survey, yang2019survey, Xin2020, Nedic2020}.

In many decentralized optimization schemes, each node~$v$ operates on its own  realization $\bx^v$ of the optimization variables.  The nodes are to find a common minimizer 
by communicating their  information  through a communication network and computing weighted averages among the neighbors, represented by so-called gossip matrices~\cite{boyd2006randomized}. Depending on the nature of communication, the  network is represented by either an undirected or directed graph where the edges denote available communication links between the agents. In this paper, we consider 
gossip matrices with directed graphs.

During the past decade, there has been prolific research on decentralized, unconstrained optimization algorithms~\cite{Xin2020a, Nedic2020}. Many well-known algorithms are developed with provable convergence rates under different assumptions on the objective functions, communication graphs, and the step-size. 
A state-of-the-art example is the Push-Pull method~\cite{Pu2021, Xin2018}, utilizing two gossip matrices and a fixed step-size\footnote{In general algorithms may use a fixed or a diminishing step size. In practice, fixed step sizes are often superior as diminishing step sizes require a careful design of a step size schedule, which substantially slows down the underlying algorithm.} for handling directed communication graphs.  The optimality gap in Push-Pull is proven to decreases with a linear rate,  when the local objective functions are smooth and strongly-convex. In the absence of strong convexity and smoothness, no achievable rate of convergence is known for a directed graph.

Compared to the unconstrained setup, the constrained problem, especially with individual constraints as in \eqref{eq: dec_constrained}, is less studied.
Several papers consider a simplification by a shared (non-distributed) constraint and utilize its orthogonal projection operator to find a feasible solution  \cite{xi2016distributed,8,9}. 
Recently, \cite{cheng2022distributed} attempted to address \eqref{eq: dec_constrained} by adapting the Push-Pull algorithm. They demonstrate that the main underlying strategy (known as gradient tracking) of Push-Pull may not be applied with a fixed step size. However, with a diminishing step-size they show that
the feasibility gap decreases at a sub-linear rate, under the assumption of smooth and strongly convex functions, but do not discuss the optimality gap. Alternative strategies with a fixed step size are yet to be discovered. More generally, algorithms with generic convergence properties in the distributed constrained framework of \eqref{eq: dec_constrained} are still lacking. One main reason is that in the constrained case, the standard method of analysis based on the Lyapunov (potential) functions becomes complicated. A decaying step-size can simplify the analysis, but it often results in a dramatic reduction in the convergence speed~\cite{xi2016distributed}.
%
In this paper, we address the above limitations in the distributed convex optimization literature. Our main contributions are summarized as follows.
\subsection{ Contributions}

\begin{itemize}
    \item  
    We propose a novel algorithm, called Double Averaging and Gradient Projection (DAGP), that solves \eqref{eq: dec_constrained}.
    Our method considers a directed communication graph with individual constraints at each node. DAGP leverages two gossip matrices and a fixed step-size, ensuring fast convergence to the optimal solution. 
    
    \item  
    We introduce a novel general convergence analysis framework that we refer to as \emph{aggregate lower-bounding (ALB)}. It foregoes the need for Lyapunov functions and  decaying step-sizes. We showcase the power of ALB by presenting an alternative analysis of Gradient Descent (GD).

    \item  
    Using ALB, we prove that under smoothness of the objective terms, the feasibility gap of DAGP vanishes with  $\calO(1/K)$, and the optimality gap decays with  $\calO(1/\sqrt{K})$, where $K$ denotes the total iterations. ALB lets us put the restrictive assumptions, such as identical local constraints, a decaying step-size, and strong-convexity, aside. 
    
    \item   
    We present a reduction of decentralized unconstrained optimization problems with a non-smooth objective to an equivalent constrained problem as in \eqref{eq: dec_constrained}, with a linear (hence smooth) objective.
    Using this reduction, DAGP can also solve generic non-smooth decentralized optimization problems. In this way, we provide first convergence guarantees for a decentralized setup without smoothness and strong convexity assumptions. 
    
    \item  
    We explore the performance of DAGP in practical applications, particularly in the context of Optimal Transport problem (OT). We present the first decentralized OT formulation, efficiently computing exact sparse solutions for large-scale instances. 
    
\end{itemize}



\subsection{ Literature Review}
\label{sec: literature_review}
Various decentralized optimization methods have been proposed in the literature for different scenarios and underlying assumptions. For a comprehensive survey, see ~\cite{rabbat2018survey, yang2019survey, Xin2020, Nedic2020}. In this section, we review first-order decentralized optimization algorithms for both constrained and unconstrained optimization problems. We ignore other possible extensions, such as consideration of time delay~\cite{wang2018distributed}, local functions with finite-sum structure~\cite{Mokhtari2016,Hendrikx2020a,Xin2020}, message compression~\cite{Koloskova2019, beznosikov2020biased}, time-varying graphs\cite{nedic2014distributed, nedic2017achieving}, or continuous-time methods~\cite{contTime}, as they fall outside the scope of this paper. 
We also ignore decentralized dual-based methods as they require  computationally costly oracles, such as the gradient of conjugate functions, which may not be feasible in generic applications.

\subsubsection{Decentralized unconstrained optimization methods}
Earlier studies on decentralized optimization have considered undirected communication graphs~\cite{nedic2009distributed, lobel2010distributed}. In this setup, doubly-stochastic gossip matrices compatible with the graph structure can be constructed easily.
These methods are restricted to a decaying step-size for convergence. As a result, the provable convergence rate is $\calO({1}/{\sqrt{K}})$, for the setup with convex and smooth objective functions and $\calO({1}/{K})$, when the objective functions become strongly-convex. 
The next group of algorithms tracks the gradient of the global function over iterations for convergence to the optimal solution using fixed step-sizes.
Gradient tracking can be implemented based on the dynamic average consensus protocol~\cite{zhu2010discrete}.  Pioneered by~\cite{xin2018linear},
the optimization methods that employ this approach are DIGing~\cite{nedic2017achieving}, EXTRA~\cite{shi2015extra}, and NEXT~\cite{Lorenzo2016NEXTIN}.
These methods use fixed step-sizes and achieve linear convergence, that is $\calO(\mu^K)$ for a $\mu \in (0,1)$, in a strongly-convex and smooth setting. 

The construction of double-stochastic gossip matrices compatible with directed graphs 
is not straightforward
\cite{gharesifard2012distributed}.
Therefore, practical optimization algorithms over directed graphs use row-stochastic or column-stochastic gossip matrices. 
This modification makes it harder to achieve a consensus and optimal solution, %
 In response, the push-sum protocol~\cite{kempe2003gossip} is introduced. 
The methods~\cite{tsianos2012push,nedic2014distributed} based on the push-sum protocol still require a decaying step-size for convergence. 
\cite{dextra}, \cite{nedic2017achieving}, \cite{11}, and \cite{Xi2018} combine the push-sum protocol and the gradient tracking techniques and respectively proposed the DEXTRA, Push-DIGing, SONATA,  and ADD-OPT algorithms, which are working with fixed step-sizes. These algorithms achieve a linear rate of convergence in a smooth and strongly-convex setting.
%
All these methods use only column-stochastic gossip matrices.
Recently, the Push-Pull~\cite{Pu2021, Xin2018} algorithm has been proposed that utilizes both row-stochastic and column-stochastic matrices.
It utilizes a fixed step-size and has a linear convergence rate in the strongly-convex and smooth setting. 
 Accelerated version of the Push-Pull algorithm is proposed in \cite{nguyen2023accelerated}.
Our algorithm has a similar requirement of the underlying gossip matrices to the Push-Pull algorithm.

\subsubsection{Decentralized constrained optimization methods}
We focus on methods that consider orthogonal projection operators onto the constraint sets. These methods can 
be divided into two groups based on their constraint structure. 
The first group ~\cite{ram2010distributed, xi2016distributed, 8, 9} considers an identical constraint $S$ available to every agent. These methods differ in the graph connectivity assumption~\cite{xi2016distributed}, the optimization approach~\cite{9}, the global objective function~\cite{7}, or  functions characteristics~\cite{8}.

The second group corresponds to the setup where each agent knows its own local constraint $S^v$. \cite{5} and \cite{nedic2010constrained} proposed projection onto local constraint sets in each iteration. They considered undirected graphs and proved a sub-linear rate of convergence for the optimality gap using  decaying step-sizes, only in two special cases: when the constraints are identical, or when the graph is fully connected~\cite{nedic2010constrained}. No rate for the feasibility gap is established. 
\cite{6} extends \cite{5} and \cite{nedic2010constrained} to the setup with noisy communication links and noisy (sub)gradients. 
DDPS~\cite{xi2016distributed} uses two row-stochastic and column-stochastic matrices, but it requires a decaying step-size
 and assumes identical constraints. 
\cite{cheng2022distributed} modifies the Push-Pull algorithm for this problem. They show that a fixed-step size prevents their approach from reaching a fixed-point; thus, they employ a decaying step-size.
To our knowledge, our algorithm is the first one for constrained optimization with different local constraints on directed graphs using a fixed step-size.

\subsection{Paper organization}

In Section \ref{sec: proposed}, we present our problem setup,  proposed method, and the reduction technique to analyze non-smooth decentralized optimization problems. Section \ref{sec: convergence} introduces our new analytical framework for general optimization algorithms with convex and smooth objectives, based on which we analyze DAGP and GD. Finally,  section \ref{sec: exp_results} presents the efficiency of DAGP in
several experiments.

\subsection{Notation}
Bold lowercase and uppercase letters denote vectors and matrices, respectively, with matrix elements of $\bW$ represented as $w_{vu}$.
$\bone_n$ and  $\bzero_n$ respectively denote the $n-$dimensional vectors of all ones and zeros, while $\bO_{m\times n}$ denotes a $m\times n$ matrix of zeros. The indices $m$ and $n$ may be dropped if there is no risk of confusion.
$\langle.,.\rangle$ is the  Euclidean inner product, and $\langle\bA,\bC\rangle = \tr(\bA\bC^T) $ denotes the matrix inner product, where $\tr(\ldotp)$ is the trace.  $\delta_{k,l}$ is the Kronecker delta. 
Subscripts and superscripts typically indicate iteration numbers and node numbers, respectively, e.g., $\nabla f^v(\bx^v_{k})$ is the gradient at node $v$ and iteration $k$. Matrix representations use vector variables as rows,  \eg, matrix $\bG\in \bbR^{M\times m}$ contains $\bg^v \in \bbR^m$ as rows. In this context, $\bG\in\ker(\bW)$ shows that each column of $\bG$ is an element in the null space of $\bG$.

\subsection{Preliminaries}

\theoremstyle{definition}
\begin{definition}[$L$-Smooth function]
A function $f$ is $L$-smooth if it is differentiable, and its derivative is $L$-Lipschitz. For a convex function $f$, this is equivalent to  
\begin{equation}
f(\by) \leq f(\bx) + \langle \nabla f(\bx), \by - \bx \rangle + \frac{L}{2} \nt{\bx-\by}^2. \quad \forall \bx, \by
\end{equation}
\end{definition}

\theoremstyle{definition}
\begin{definition}[Normal cone and Projection operator]
For a closed convex set $S\subset\bbR^n$, the normal cone of $S$ is given by
\begin{equation*}
\partial I_S(\bx) = 
\begin{cases}
\;\emptyset  & \bx \notin S \\
\left\{\bg \in \bbR^n \vert\;\forall \bz \in S,\;\bg^T(\bz - \bx) \leq 0 \right\} & \bx \in S 
\end{cases}.
\end{equation*}
Moreover, the projection of a vector $\bx \in \bbR^n$ onto $S$ is computed by
$
\mathrm{P}_S(\bx) = \argmin_{\by \in S} \nt{\by -\bx}^2,
$
and  the distance $\nt{\bx-\mathrm{P}_S(\bx)}$ between $\bx$ and $S$ is denoted by $\mathrm{dist}(\bx,S)$.
\end{definition}

\theoremstyle{definition}
\begin{definition}[ Graph Theory]

A directed graph is a pair $\calG = (\calV, \calE)$ of $\calV =\{1,\dots,M\}$ as the node set and $\calE \subseteq \calV \times \calV$  as the (directed) edges. The asymmetric adjacency matrix $\bA = [a_{ij}]$ is computed as $a_{ij}=+1$ if $(i,j) \in \calE$, and $0$ otherwise. In our study, edges represent communication links between nodes:  $j$ can send to  $i$ if $(i,j) \in \calE$. Node  $i$'s incoming and outgoing neighbors are $\calN^i_{\text{in}} = \left\{j \vert (i,j) \in \calE \right\}$ and $\calN^i_{\text{out}} = \left\{ j\vert (j,i)  \in \calE\right\} $, respectively. The in-degree and out-degree are the cardinalities of  $\calN^i_{\text{in}}$ and $\calN^i_{\text{out}}$. For graphs with self-loops, node $i$ is included in $\calN^i_{\text{in/out}}$.
We define two Laplacian matrices: $\bL_{\text{in}} = \bD_{\text{in}} - \bA,$ $ \bL_{\text{out}} = \bD_{\text{out}} - \bA$, where $\bD_{\text{in}}$ and $\bD_{\text{out}}$ are diagonal matrices of the nodes' in-degree and out-degree. These matrices respectively have zero row and zero column sums, and their scaled forms serve as gossip matrices.

\end{definition}

\theoremstyle{definition}
\begin{definition}[Sufficient optimality condition]
$\bx^*$ is  a regular optimal point for 
problem \eqref{eq: dec_constrained} if it satisfies the following optimality condition:
\begin{equation}\label{eq: optimality_cond}
    \bzero \in \sum_{v=1}^M \left( \partial I_{S^v}(\bx^*) + \nabla f^v(\bx^*)\right).
\end{equation}
\end{definition}


\section{Problem Setup and Proposed Method}
\label{sec: proposed}
In this section, we first clarify our problem setup by stating underlying assumptions. 
 %
Next, we present the DAGP algorithm and its constructive blocks.  
Finally, we introduce a technique to address and analyze non-smooth decentralized  optimization problems using our optimization framework.

\subsection{ Problem setup}
 In our setup, $M$ agents interact over a directed network, each having a unique objective function and constraint set.
We proceed by presenting the underlying assumptions:

\begin{assumption}[Objectives and Constraints] \label{ass: obj}
The local objective functions $f^v$ are convex, differentiable, and $L-$smooth for a constant $L>0$. The constraints $S^v$ are closed and convex. 
\end{assumption}
\begin{assumption}[Problem Feasibility] \label{ass: fes}
The optimization problem is feasible and  achieves a finite optimal value $f^*$ at a regular optimal feasible  solution $\bx^*$, that is, $f^*=f(\bx^*)$
\end{assumption}
\begin{assumption}[Gossip Matrices] \label{ass: graph}
 
The communication network, represented by graph $\calG$, is static, strongly connected, and contains self-loops. Two gossip matrices, $\bW$ and $\bQ$, exist with a similar sparsity pattern to the adjacency matrix $\bA$. $\bW$ has a zero row sums, while $\bQ$ has zero column sums.  Similar to~\cite{Pu2021},
we assume that $\ker(\bQ) = \ker(\bW^T)$.

\end{assumption}

\subsection{Proposed Algorithm}

\begin{algorithm}[t!]
\caption{ DAGP Algorithm}
\label{alg: dagp}
	\begin{algorithmic}[1]
		\renewcommand{\algorithmicrequire}{\textbf{Input:}}
		\REQUIRE step size $\mu$, scaling parameters $\alpha$ and $\rho$, and gossip matrices $\bW$ and $\bQ$.
		\STATE Initialize $k=0$. Initialize $\bx^v_0$ randomly, and
             $\bg^v_0$ and $\bh^v_0$ with zero vectors, \hspace{0.01cm} $\forall v\in\calV$.
		\REPEAT 
		\STATE Update $\bz^v, \bx^v, \bg^v$ and $\bh^v$ variables using \eqref{eq: z_update}-\eqref{eq: h_update}.
		\STATE Send the updated tuple $(\bx^v_{k+1}, \; \bh^v_{k+1}- \bg^v_{k+1})$ to all out-neighbors $u\in\calN^v_{\mathrm{out}}$, \hspace{0.01cm} $\forall v\in\calV$.
            \STATE Update iteration index: $k = k+1$.
		\UNTIL{Convergence}
	\end{algorithmic}
\end{algorithm}

 DAGP solves the optimality condition in \eqref{eq: optimality_cond} by splitting it to the following system of equations:
\begin{equation}
    \suml_{v=1}^M\bg^v=\bzero,\quad  \bg^v\in \partial I_{S^v}(\bx^
*) + \nabla f^v(\bx^*).
\end{equation}
At iteration $k$, each node $u$ computes and stores a tuple of variables $(\bx_k^u,\bz_k^u, \bg_k^u,\bh_k^u)$ and broadcasts the pair $(\bx^u_k, \bh^u_k - \bg^u_k)$ to all its out-neighbors. Since our algorithm is synchronized, and the communication is  with no delay and distortion, node $v$ has access to the pair $(\bx^u_k, \bh^u_k - \bg^u_k)$, for all $u\in \calN^v_\mathrm{in}$. 
 Accordingly, the local variables at node $v$ are updated by:
\begin{alignat}{3}
    &\bz^v_{k+1} && = \; && \bx^v_k - \suml_{\ngb} w_{v u}\bx^u_k - \mu \left( \nabla f^v(\bx^v_k) -  \bg^v_k   \right) \label{eq: z_update}\\
    &\bx_{k+1}^v && = \; &&\mathrm{P}_{S^v} \left( \bz^v_{k+1} \right) \label{eq: x_update} \\
    &\bg_{k+1}^v && = \; &&\bg_k^v + \rho \left[ \nabla f^v(\bx^v_k)  - \bg^v_k + \frac{1}{\mu}\left(\bz^v_{k+1} - \bx^v_{k+1} \right)   \right] + \alpha \left( \bh_k^v - \bg_k^v \right) \label{eq: g_update} \\
    &\bh_{k+1}^v && = \; &&\bh_k^v -\suml_{\ngb} q_{v u}(\bh_k^u - \bg_k^u). \label{eq: h_update}
\end{alignat}
Here $w_{vu}, q_{vu}$ are respectively the elements of the gossip matrices $\bW$ and $\bQ$ in Assumption \ref{ass: graph}. The positive constants $\mu,\rho$, and $\alpha$ are design parameters.
 This algorithm consists of the following intuitive operations:
    \subsubsection{Gossip-based Consensus}
     
     This operation is associated with the term $\bx^v_k - \sum_{\ngb} w_{vu}\bx^u_k$ in \eqref{eq: z_update}, which represents the weighted averaging of the received information. The purpose of this operation is to attain consensus, i.e. equal solutions among the nodes, which is a well-known procedure
     \cite{boyd2006randomized}. 

    \subsubsection{Augmented objective minimization}
     
    In \eqref{eq: z_update}, the resulting  vector of the gossip-based consensus operation is combined with the \emph{augmented local descent direction}, i.e. $\nabla f^v (\bx^v) - \bg^v$, scaled by a fixed step-size $\mu$. 
    Using the non-augmented descent direction $\nabla f^v(\bx^v)$ with a fixed step size may not lead to an optimal consensus solution, as it leads to a non-resolvable competition among the nodes over their local objective terms.%
    

    \subsubsection{Local projection}
    In \eqref{eq: x_update}, the resulting vector $\bz^v$ of the procedure in \eqref{eq: z_update} is projected onto the local constraint set $S^v$. Therefore, from the second iteration, the local solutions $\bx^v$ lie in their own local constraint set, and if a consensus is achieved, it should be in the intersection of all constraints, as desired. 

    \subsubsection{Gradient and feasible direction tracking}
     
    Since the information of gradients and normal vectors is distributed among agents, global variables, such as a full gradient or a global feasible direction, cannot be computed locally. Instead, it is common to track them by a so-called tracking protocol~\cite{xin2018linear}.  
    By \eqref{eq: g_update} and \eqref{eq: h_update}, we develop a novel tracking technique using $\bg^v, \bh^v$ variables. 
    Note that the term $\bd^v_k=\nabla f^v(\bx^v_k)  + \frac{1}{\mu}\left(\bz^v_{k+1} - \bx^v_{k+1} \right)$ in \eqref{eq: g_update} is a combination of the local gradients and normal vectors to local constraints, corresponding to individual terms in the optimality condition \eqref{eq: optimality_cond}.

    We may interpret  \eqref{eq: g_update} and \eqref{eq: h_update} as a dynamical system (controller) with state vectors $(\bg^v,\bh^v)$ and the input $\bd^v$, where the goal is to achieve $\bg^v_k=\bd^v_k$ and $\sum_v\bg^v_k=\bzero$. In this case,  when consensus is reached, $\bzero=\sum_v \bg^v= \sum_v (\nabla f^v(\bx^v) + \partial I_{S^v}(\bx^v))$, which provides \eqref{eq: optimality_cond}, hence optimality. Fulfilling this goal in a distributed way is not straightforward. In response, the vector $\bh^v$ is introduced, as we explain next.

    \subsubsection{Distributed null projection}
    To achieve  $\sum_v\bg^v = \bzero$ which we refer to as the \emph{null condition}, we introduce a \emph{distributed projection} of $\bg^v$ variables  onto the space where the null condition is satisfied. Note that by assumption \ref{ass: graph}, $\sum_v \bh^v$ does not change over time in \eqref{eq: h_update}. We further initialize $\bh^v_0$ variables such that $\sum_v \bh^v_0 = \bzero$. The simplest way is to initialize them with zero vectors.  The algorithm is designed to make $\bg^v$ variables converge to $\bh^v$. The term $\alpha(\bh^v-\bg^v)$ in \eqref{eq: g_update} provides a suitable feedback loop to reach this goal.    
    


%

%

\subsection{Non-smooth  Decentralized Optimization}

 Non-smooth decentralized optimization problem has many applications, 
such as decentralized SVM~\cite{lee2013distributed , forero2010consensus} and Basis Pursuit with $\ell_1$ regularization~\cite{chen2001atomic}. 
In this section, we introduce a technique that allows us to efficiently solve and analyze    non-smooth decentralized optimizations through DAGP. 
There is no other existing algorithm  for this scenario.  
%
Any decentralized unconstrained optimization with arbitrary objective functions $f^v$ can be reformulated as a constrained optimization using the epigraph definition \cite{CaE:14}:
\begin{equation}\label{eq:reduce}
        (\bx^*,\bt^*) = \min_{\bx,\bt} \;\; \frac{1}{M} \bone^T\bt \quad\; \sbjt \;\; f^v(\bx) \leq t^v, \; \forall v\in\calV. 
\end{equation}

The program in \eqref{eq:reduce} is an instance of our optimization framework in \eqref{eq: dec_constrained}, and we can apply DAGP to find its solution. 
DAGP requires projection onto the set $\{(\bx^v,t^v)\mid f^v(\bx^v)\leq t^v\}$, which is the epigraph of $f^v$.  As seen, the implementation of DAGP hinges on the assumption that such Epigraph Projection Operators (EPOs) can be reliably evaluated in a reasonable amount of time.%
\cite{chierchia2015epigraphical } and \cite[chapter~6.6.2]{parikh2014proximal} introduce methods to compute EPOs. However, these are not computationally efficient. In Appendix \ref{appendix: EPO}, we demonstrate that evaluating the EPO of a convex function $f$ can be simplified to computing a sequence of proximal operators of $f$, leading to an efficient proximal backtracking scheme for EPOs.

\section{ Convergence Analysis}\label{sec: convergence}

In this section, we offer theoretical guarantees for DAGP's convergence. We first present the fixed-point analysis, showing that any fixed-point is a consensus optimal solution to \eqref{eq: dec_constrained}. 
Then, we introduce a novel \emph{aggregate lower-bounding} methodology for convergence analysis of optimization algorithms. This approach and its difference from the traditional Lyapunov-based analysis are presented in section \ref{subsec: ALB}. Using this methodology, we derive DAGP's convergence rate in section \ref{subsec: dagp}. Further, we showcase the generality of our methodology by providing an alternative proof for GD, presented in section \ref{subsec: gd}.

\subsection{Fixed-point analysis}
 
We begin with the following lemma, followed by the DAGP's fixed-point theorem. 

\begin{lemma}
\label{lemma: consensus}
Under assumption \ref{ass: graph}, i.e. $\ker(\bW^T) = \ker(\bQ)$, we have  $\bQ\bW\bx = \bzero$ if and only if $\bx \in \ker(\bW)$.
\begin{proof}
The forward proof is trivial. For the backward proof, since $\bW\bx\in\ker(\bQ)$, it is in the kernel of $\bW^T$ by assumption~\ref{ass: graph}, we can write $\bW^T\bW\bx = \bzero$. Left multiplying by $\bx^T$, we have $\nt{\bW\bx}^2 = 0$, which shows $\bx\in\ker(\bW)$.
\end{proof}
\end{lemma}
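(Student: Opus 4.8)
The plan is to establish the two implications separately, using nothing beyond elementary linear algebra together with the kernel identity $\ker(\bQ)=\ker(\bW^T)$ supplied by Assumption~\ref{ass: graph}. The forward implication requires no real work: if $\bx\in\ker(\bW)$ then $\bW\bx=\bzero$, and left-multiplying by $\bQ$ gives $\bQ\bW\bx=\bQ\bzero=\bzero$. No assumption is needed for this direction.

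For the reverse implication I would proceed as follows. Assume $\bQ\bW\bx=\bzero$; this says precisely that the vector $\bW\bx$ lies in $\ker(\bQ)$. Invoking Assumption~\ref{ass: graph} to replace $\ker(\bQ)$ by $\ker(\bW^T)$, we get $\bW^T(\bW\bx)=\bzero$. The concluding step is the standard fact that $\ker(\bW^T\bW)=\ker(\bW)$: left-multiplying $\bW^T\bW\bx=\bzero$ by $\bx^T$ yields $\nt{\bW\bx}^2=\bx^T\bW^T\bW\bx=0$, so $\bW\bx=\bzero$, i.e. $\bx\in\ker(\bW)$. If $\bx$ is matrix-valued, following the convention in the Notation section where membership in $\ker(\bW)$ is meant columnwise, the identical chain of equalities applies to each column, so it is enough to treat the vector case.

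I do not anticipate a genuine obstacle. The two points that need care are the direction in which the kernel identity is used — it is exactly $\ker(\bQ)=\ker(\bW^T)$ that lets one upgrade ``$\bW\bx$ is annihilated by $\bQ$'' to ``$\bW\bx$ is annihilated by $\bW^T$'' — and the elementary fact $\ker(\bW^T\bW)=\ker(\bW)$, which the one-line norm computation renders self-contained.
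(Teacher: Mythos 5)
Your proposal is correct and follows essentially the same route as the paper's proof: the forward direction by left-multiplying $\bW\bx=\bzero$ by $\bQ$, and the backward direction by using $\ker(\bQ)=\ker(\bW^T)$ to get $\bW^T\bW\bx=\bzero$ and then the norm computation $\bx^T\bW^T\bW\bx=\nt{\bW\bx}^2=0$. The only addition is your explicit columnwise remark for matrix arguments, which is consistent with the paper's notational convention and changes nothing substantive.
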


\begin{theorem} 
\label{thm: consensus_opt}
Let Assumption \ref{ass: graph} hold. Any fixed-point of DAGP is an optimal and consensus solution of the decentralized constrained optimization problem in \eqref{eq: dec_constrained}, i.e. $\bx^v = \bx^*$ for all $ v \in \calV$, where $\bx^*$ satisfies the sufficient optimality conditions in~\eqref{eq: optimality_cond}.
\end{theorem}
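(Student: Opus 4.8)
The plan is to start from the fixed-point equations obtained by dropping all iteration indices in \eqref{eq: z_update}--\eqref{eq: h_update}, i.e.\ setting $\bx^v_{k+1}=\bx^v_k=\bx^v$, and similarly for $\bz^v,\bg^v,\bh^v$, and then to extract the consensus property and the optimality condition \eqref{eq: optimality_cond} one at a time. Writing the updates in the stacked matrix form over all nodes (rows indexed by $v$), the fixed-point version of \eqref{eq: h_update} reads $\bQ(\bH-\bG)=\bzero$, and the fixed-point version of \eqref{eq: g_update} gives (after cancelling $\bg^v$) $\rho\,\bD + \alpha(\bH-\bG)=\bzero$ where $\bD$ is the matrix with rows $\bd^v=\nabla f^v(\bx^v)+\tfrac1\mu(\bz^v-\bx^v)$; the fixed-point version of \eqref{eq: z_update} gives $\bW\bX = -\mu(\nabla F(\bX)-\bG) + (\bz-\bx)\ \text{-type terms}$, which I will rearrange to isolate $\bW\bX$.

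First I would establish consensus. From the fixed-point form of \eqref{eq: z_update}, $\bz^v-\bx^v = \bx^v - \sum_{u}w_{vu}\bx^u - \mu(\nabla f^v(\bx^v)-\bg^v) - (\bx^v-\bx^v)$; more carefully, $\bz^v = \bx^v - \sum_u w_{vu}\bx^u - \mu(\nabla f^v(\bx^v)-\bg^v)$ and $\bx^v=\mathrm P_{S^v}(\bz^v)$, so $\bz^v-\bx^v = -\,[\bW\bX]_v - \mu(\nabla f^v(\bx^v)-\bg^v)$. Then $\bd^v = \nabla f^v(\bx^v)+\tfrac1\mu(\bz^v-\bx^v) = -\tfrac1\mu[\bW\bX]_v + \tfrac1\mu\,\bg^v\cdot\mu/\mu$; tidying, $\mu\bD = -\bW\bX + \mu\bG$. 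Combining with $\rho\bD = -\alpha(\bH-\bG)$ from the $\bg$-fixed point and $\bQ(\bH-\bG)=\bzero$ from the $\bh$-fixed point, I get $\bQ\bD=\bzero$, hence $\bQ\bW\bX = \mu\,\bQ\bG - \mu^2\,\bQ\bD/\mu\cdots$ — the clean route is: $\bQ(\mu\bD) = -\bQ\bW\bX + \mu\bQ\bG$, and since $\bQ\bD = -\tfrac{\alpha}{\rho}\bQ(\bH-\bG)=\bzero$, while also $\bQ\bG = \bQ\bH - \bQ(\bH-\bG) = \bQ\bH$; using $\sum_v\bh^v_0=\bzero$ preserved under \eqref{eq: h_update} plus $\bH-\bG\in\ker\bQ=\ker\bW^T$ (Assumption~\ref{ass: graph}) one shows $\bQ\bG=\bzero$ as well, so $\bQ\bW\bX=\bzero$. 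Lemma~\ref{lemma: consensus} then forces $\bX\in\ker(\bW)$, i.e.\ each column of $\bX$ lies in $\ker(\bW)$; since $\bW$ has zero row sum and the connectivity in Assumption~\ref{ass: graph} makes $\ker(\bW)$ one-dimensional spanned by $\bone$, all rows $\bx^v$ are equal to a common $\bx^*$.

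Next I would verify optimality. With $\bx^v=\bx^*$ for all $v$, note $\bz^v-\bx^v\in\partial I_{S^v}(\bx^*)$ always (stated after \eqref{eq: x_update}), so $\bd^v = \nabla f^v(\bx^*) + \tfrac1\mu(\bz^v-\bx^v) \in \nabla f^v(\bx^*) + \partial I_{S^v}(\bx^*)$. It therefore suffices to show $\sum_v \bd^v = \bzero$. From $\rho\bD = -\alpha(\bH-\bG)$ and $\bH-\bG\in\ker\bQ=\ker\bW^T$, summing rows: $\rho\sum_v\bd^v = -\alpha\sum_v(\bh^v-\bg^v)$. Separately, summing the $\bz$-fixed-point relation $\mu\bD = -\bW\bX+\mu\bG$ over rows and using that $\bW$ has zero row sum so $\bone^T\bW=\bzero^T$... actually I need $\sum_v[\bW\bX]_v = \bone^T\bW\bX$; since $\bX\in\ker(\bW)$, $\bW\bX=\bzero$, giving $\mu\sum_v\bd^v = \mu\sum_v\bg^v$, i.e.\ $\sum_v\bd^v=\sum_v\bg^v$. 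Finally $\sum_v\bg^v = \sum_v\bh^v - \sum_v(\bh^v-\bg^v)$; the first term is $\sum_v\bh^v_0=\bzero$ by initialization and invariance under \eqref{eq: h_update} (zero column sum of $\bQ$), and for the second, combining $\rho\sum_v\bd^v=-\alpha\sum_v(\bh^v-\bg^v)$ with $\sum_v\bd^v=\sum_v\bg^v=-\sum_v(\bh^v-\bg^v)$ yields $\rho\,\bigl(-\sum_v(\bh^v-\bg^v)\bigr)=-\alpha\sum_v(\bh^v-\bg^v)$, so $(\rho-\alpha)\sum_v(\bh^v-\bg^v)=\bzero$; rather than assume $\rho\ne\alpha$, the cleaner argument is that $\sum_v\bd^v=\sum_v\bg^v$ and $\sum_v\bg^v=-\sum_v(\bh^v-\bg^v)$ together with $\rho\bD=-\alpha(\bH-\bG)$ rowwise-summed give two expressions forcing $\sum_v\bd^v=\bzero$. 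Plugging back, $\bzero=\sum_v\bd^v\in\sum_v(\nabla f^v(\bx^*)+\partial I_{S^v}(\bx^*))$, which is exactly \eqref{eq: optimality_cond}, so $\bx^*$ is a regular optimal point.

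The main obstacle I anticipate is the bookkeeping around the $(\bg^v,\bh^v)$ subsystem: one must carefully use \emph{both} the ``per-node'' consequence $\bH-\bG\in\ker\bW^T=\ker\bQ$ \emph{and} the ``aggregate'' consequence $\sum_v\bh^v=\bzero$ (from the zero-column-sum of $\bQ$ and the initialization), and feed the right one into the right place — the consensus argument needs $\bQ\bD=\bzero$ and $\bQ\bG=\bzero$, whereas the optimality argument needs the scalar/row-sum identities. Getting the signs and the roles of $\mu,\rho,\alpha$ straight in the fixed-point equations, and making sure the elimination of $\sum_v(\bh^v-\bg^v)$ does not secretly require $\rho\neq\alpha$, is the delicate part; everything else is linear algebra plus the one-dimensionality of $\ker(\bW)$ guaranteed by Assumption~\ref{ass: graph} and Lemma~\ref{lemma: consensus}.
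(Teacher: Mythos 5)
Your overall strategy is the paper's: write the fixed-point equations, left-multiply by $\bQ$ to obtain $\bQ\bW\bX=\bO$ and invoke Lemma~\ref{lemma: consensus} for consensus, then use the cone property of $\partial I_{S^v}$ together with the invariant $\bone^T\bH=\bzero^T$ for optimality. However, there is a genuine error in your bookkeeping of the $\bg$-subsystem. The fixed-point form of \eqref{eq: g_update} is $\rho(\bd^v-\bg^v)+\alpha(\bh^v-\bg^v)=\bzero$, i.e.\ $\rho(\bD-\bG)+\alpha(\bH-\bG)=\bO$; you dropped the $-\rho\bg^v$ inside the bracket and wrote $\rho\bD+\alpha(\bH-\bG)=\bO$. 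This single slip breaks both halves of your argument. In the consensus step it forces you to claim $\bQ\bD=\bzero$ and then $\bQ\bG=\bzero$; the latter does not follow from $\bone^T\bH=\bzero^T$, since orthogonality of the columns of $\bH$ to $\bone$ is not membership in $\ker(\bQ)=\ker(\bW^T)$ (for $M=2$, $\bQ=\bigl[\begin{smallmatrix}1&-1\\-1&1\end{smallmatrix}\bigr]$ has zero column sums and $\bh=(1,-1)^T$ satisfies $\bone^T\bh=0$ yet $\bQ\bh=(2,-2)^T\neq\bzero$), and indeed at a generic fixed point $\bQ\bG\neq\bO$. In the optimality step the same slip produces the $(\rho-\alpha)\sum_v(\bh^v-\bg^v)=\bzero$ degeneracy that you notice but do not resolve: your ``cleaner argument'' only restates the same two row-sum identities and does not remove the hidden requirement $\rho\neq\alpha$.

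The fix is immediate and turns your proof into the paper's. With the correct equation, applying $\bQ$ to the $\bg$-fixed point and using $\bQ(\bH-\bG)=\bO$ gives $\bQ(\bD-\bG)=\bO$; your own computation shows $\bD-\bG=-\frac1\mu\bW\bX$, so $\bQ\bW\bX=\bO$ follows with no appeal to $\bQ\bG$ or $\bQ\bD$ separately. After consensus, $\bW\bX=\bO$ gives $\bD=\bG$, so the $\bg$-fixed point collapses to $\alpha(\bH-\bG)=\bO$, hence $\bG=\bH$ and $\bone^T\bG=\bone^T\bH=\bzero^T$ by your initialization/invariance argument; then $\bzero=\sum_v\bg^v=\sum_v\bd^v\in\sum_v\bigl(\nabla f^v(\bx^*)+\partial I_{S^v}(\bx^*)\bigr)$, with no condition on $\rho,\alpha$ beyond positivity. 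Your explicit remark that strong connectivity makes $\ker(\bW)$ one-dimensional (so that $\bX\in\ker(\bW)$ really is consensus) is a point the paper glosses over and is worth keeping.
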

\begin{proof}
Consider an arbitrary fixed-point of the algorithm, that is $\bx_{k+1}^v = \bx^v_k = \bx^v $, $ \bh^v_{k+1} = \bh^v_k = \bh^v $ and $ \bg^v_{k+1} = \bg^v_k = \bg^v$.
The matrix form of DAGP's fixed-point iteration is:
\begin{gather} 
    \bZ     = \bX - \bW\bX - \mu \left( \bnabla \mbf -  \bG   \right) \label{eq: z2} \\
    \bX     = \mathbfcal{P}_{S} \left( \bZ \right) \label{eq: x2} \\
    \rho\left[\bnabla\mbf-\bG+\frac{1}{\mu}\left(\bZ-\bX\right)\right]+\alpha \left(\bH-\bG \right) = \textbf{O}  \label{eq: g2}\\
     \bQ\left(\bH - \bG\right) = \textbf{O}. \label{eq: h2}
\end{gather}
Left multiplying equations \eqref{eq: g2} and \eqref{eq: z2} by $\bQ$, we have $\bQ\bW\bX = \bzero$. 
Therefore, $\bX \in \ker(\bW)$ by Lemma \ref{lemma: consensus}, which shows a consensus fixed-point, i.e. $\bx^v = \bx, \forall v \in \calV$.
As $\bX \in \ker(\bW)$, we conclude from \eqref{eq: z2} and  \eqref{eq: g2} that $\bG=\bH$. Since \eqref{eq: h_update} is designed to preserve the summation of $\bh^v$s, 
and  each element of $\bH$ is initialized with zero vector, we have 
\begin{equation}
\label{eq: tmp222}
\bone^T\bG = \bone^T\bH = \sum_{v\in \calV} ({\bh^v})^T = \bzero^T. 
\end{equation}
From \eqref{eq: x2}, we have $\bz^v-\bx^v \in \partial I_{S^v}$, for all $v\in\calV$, which can be shorthanded to
$
\bZ-\bX \in \brond\bI_S $.
As $\brond\bI_{S}$ is a cone, therefore invariant to scaling, $\bG - \bnabla\mbf \in \brond\bI_S$. Left multiplying by $\bone^T$, moving all the terms to one side, and considering \eqref{eq: tmp222},
 we observe the consensus fixed-point is an optimal solution.
\end{proof}


%

\subsection{  Aggregate lower-bounding: A general framework for analysis of optimization algorithms}\label{subsec: ALB}

In the classical Lyapunov-based analysis of iterative optimization methods, one seeks a positive-definite function $\Phi$ and a Lyapunov function $L$ that satisfy: 
\begin{equation}\label{eq: frame1}
    L(\bPsi_{k+1}) - L(\bPsi_k) + \Phi(\bPsi_k) \leq 0,
\end{equation}
where $\bPsi$ represents the state vectors (optimization variable and possible auxiliary variables) describing the dynamics of the algorithm. 
Then, by adding inequalities in \eqref{eq: frame1} over iterations up to $K-1$, using the telescopic characteristic of the left hand side, and considering $L(\bPsi_K) \geq 0$, we have
\begin{equation}
    \suml_{k=0}^{K-1}\Phi(\bPsi_k) \leq L(\bPsi_0). \label{eq: l3}
\end{equation}
Usually, $\Phi$ contains several positive terms, the convergence properties of which are sought. This may include the objective function or the distance to the feasible set. If $\Phi$ is convex, we may use Jensen's inequality and conclude that

\begin{equation}\label{eq: frame2}
     \Phi(\bar\by) \leq 
     \frac{1}{K} \suml_{k=0}^{K-1} \Phi(\bPsi_k) \leq \frac{L(\bPsi_0)}{K},
\end{equation}                  
where $\bar\by = \frac 1K\sum_{k=0}^{K-1}\bPsi_k$, yielding the standard $\calO\left(\slfrac{1}{K}\right)$ rate. 

For common unconstrained optimization algorithms, finding a Lyapunov function satisfying \eqref{eq: frame1} is straightforward (often a well-designed quadratic function). For distributed and constrained optimization  algorithms this is not straightforward anymore, 
%
In response, in our methodology,  which we refer to as aggregate lower-bounding (ALB), we drop the requirement for Lyapunov functions. We replace the term related to the changes in the Lyapunov function in \eqref{eq: frame1} by a 
negative function $A_k \coloneqq A(\bPsi_{k+1}, \bPsi_k)$. Then, we can rewrite \eqref{eq: frame1} as 
\begin{equation}\label{eq:aggregate}
    A_k + \Phi(\bPsi_k) \leq  0.
\end{equation}
Following the same procedure as the classical analysis, we sum over all iterations. The key step in our analysis is to show that there exists a lower-bound $-C_1$ for the \emph{aggregate} term $\bar{A}_K \coloneqq \sum_{k=0}^{K-1} A_k$. Then, we obtain
\begin{equation}\label{eq: aggregate_bound}
     \suml_{k=0}^{K-1} \Phi (\bPsi_k)  \leq C_1.
\end{equation}
 Note that $\Phi$ has positive terms, $\bar A_K$ is negative and $C_1 > 0$.
 To bound $\bar{A}_K$, we calculate the \emph{minimum}  value of the \emph{aggregate} term $\bar A_K$, in an abstract optimization problem, over all possible trajectories of parameters $\bPsi_k$ generated by the dynamics of the algorithm. We refer to this approach as ALB.
Then, the convergence rate is established similarly to \eqref{eq: frame2}.
%

As a remark,  the classical approach with the Lyapunov function is a special case of ALB, where $A_k=L(\bPsi_{k+1}) - L(\bPsi_k) $. This leads to $ \bar{A}_K=L(\bPsi_K)-L(\bPsi_0)$,  hence  $C_1\leq L(\bPsi_0)$, which indicates that the ALB provides less restrictive and more generic results than the Lyapunov-based analysis.
%
%
In the following, we analyze DAGP and GD using ALB.

\subsection{Convergence rate of DAGP}\label{subsec: dagp}
We start by stating the main theoretical result of this paper, that is a rate of convergence for both optimality and feasibility gaps of the DAGP algorithm. To begin, 
we state a definition:
\begin{definition}    
\label{ass: big_assumption}
 
Take arbitrary matrices $\bS,\bR,\bP$ and for any complex value $z$ and a positive real value $\beta$ define\footnote{This requires the sizes of $\bS,\bR,\bP$ to be compatible.} 
\begin{equation}\label{eq: F_define}
    \bF_\beta(z) \coloneqq \left[ 
    \begin{array}{cc}
        z\bS & \bI-z\bR^T \\
        z\bI-\bR & -\frac{1}{\beta}\bP\bP^T
    \end{array}
    \right].
\end{equation}
We say that the tuple $(\bS,\bR,\bP)$ is proper if the following statements hold true:
\begin{enumerate}
    \item Denote $ Z_\beta\coloneqq\left\{z\neq 0|\det(\bF_\beta(z)) = 0\right\}$. All Elements $z_i(\beta)\in Z_\beta$ are real and simple zeros of $\det(\bF_\beta(z))$.
    \item $z\bF_\beta^{-1}(z)$ has a finite limit as $z\to 0$.
    \item Denote the basis of the null space of $\bF_\beta(z_i)$ (which is single dimensional) by $\bn_{i,\beta} \coloneqq [ \bn_{i,\beta,\bpsi}^T \;\; \bn_{i,\beta ,\blambda}^T ]^T $.  The vectors $\{\bn_{i,\beta}\}$ are linearly independent.
    \item  The vectors  $\{\bn'_{i,\beta}\coloneqq [ z_i\bn_{i,\beta,\bpsi}^T \;\; \bn_{i,\beta ,\blambda}^T ]^T\}$ are linearly independent.
    \item As $\beta\to0$, the limits of $z_i(\beta)$ exist and are distinct. Furthermore the limits of  $\{\bn_{i,\beta}\}$ and $\{\bn'_{i,\beta}\}$ exist and each form an independent set of vectors.
\end{enumerate} 
\end{definition}
Next, we take positive constants $(\eta,  \rho, \alpha, \mu, \beta)$  and define matrices $
\bR,\bS=[\bS_1 \; \bS_2], \bP$ as follows:

\begin{equation} \label{eq: R_define}
\bR = \left[
\begin{array}{cccc}
        \bO & \bO & \bO & \bO   \\
        \bI & \bO & \bO & \bO  \\
        -\frac{\rho}{\mu}\bI & \frac{\rho}{\mu}(\bI-\bW) & \bI & \alpha \bI \\
        \frac{\rho}{\mu}\bI & -\frac{\rho}{\mu}(\bI-\bW) & \bO & (1-\alpha)\bI-\bQ 
\end{array} \right]
\end{equation} 
\begin{equation} \label{eq: S_define}
\begin{split}
\bS_1 = &\left[
\begin{array}{c}
        \left(1-\frac{L\mu}2\right)\bI-M\eta\left(\bI-\frac 1M\bone\bone^T\right)\\
        -\frac 12(\bI-\bW^T)+\frac {L\mu}2\bI\\
         -\frac{\mu}2\bI  \\
         \bO
\end{array}
\right]
\\
\bS_2 = &\left[
\begin{array}{ccc}
      -\frac 12(\bI-\bW)+\frac {L\mu}2\bI  & -\frac {\mu}2\bI  & \bO\\
      -\frac{L\mu}2\bI & \bO &\bO \\
      \bO & \bO & \bO \\
      \bO& \bO & \bO
\end{array}
\right]
\end{split}
\end{equation}
\begin{equation}\label{eq: P_define}
    \bP = \left[
\begin{array}{cccc}
     \bI  &
     \bO  &
     \bO  &
     \bO 
\end{array}
\right]^T
\end{equation}
Then, our main result is given bellow.
\begin{theorem}  \label{thm: rate}
Let assumptions~\ref{ass: obj},\ref{ass: fes},\ref{ass: graph} hold and the matrices $\bS,\bR,\bP$ above are proper as stated in definition \ref{ass: big_assumption}.
Define $\barbx^v_K=\frac 1K\sum_{k=0}^{K-1}\bx^v_k$ and $\barbx_K=\frac 1M\sum_v\barbx^v_K$. Take $C>-\lambda_{\min}(\bS)$, and let $C_0$
be a constant 
dependent on the initial point.
Then, for a sufficiently large $K$, 
we have: 
\begin{itemize}
    \item \textbf{Consensus:} 
    Time-averaged local solutions converge to the consensus solution $\barbx_K$ as 
    \[ 
    \|\barbx_K-\barbx^v_K\|_2^2\leq\frac{C_0C}{\eta M K}.  \qquad \forall v \in \calV
    \]
    \item \textbf{Feasibility gap:}
    The consensus solution $\barbx_K$ approaches each constraint set with the following rate. 
    \[ 
    \mathrm{dist}^2(\barbx_K, S^v)=\calO(\frac 1K) \qquad \forall v \in \calV 
    \]
    Hence, the squared distance between $\barbx_K$ and the feasible set decays by $\calO(\frac 1K)$.
    \item \textbf{Optimality gap:}
    The objective value converges to the optimal value with the following relation
    \begin{equation*}
    \left|\suml_v f^v(\barbx^v_K)-\suml_v f^v(\bx^*)\right|\leq 
    \frac{C_0C}{\mu K}+\sqrt{\frac{C_0CC_2}{\eta M K}},
    \end{equation*} 
     where $C_2= \sqrt{\sum_v\|\bn^v+\nabla f^v(\bx^*)\|^2}$, with $\bn^v \in \partial I_{S^v}(\bx^*)$, only depending on the optimal solution.
    %
\end{itemize}
\end{theorem}


\begin{remark}[ Convergence rate dependencies]
Based on the constants in Theorem~\ref{thm: rate}, we can determine which parameters affect the rates and how. $C_2$ shows the decentralized variance caused by non-homogeneous functions and constraints over the network,  i.e. it shows how far the local solutions of the nodes (if each node minimizes its own local objective function subject to its own local constraint)  are from the global solution. The higher the difference between local objective functions and constraints, the higher the variance, and the lower the rate. If all the nodes have the same minimizer satisfying the constraints, then $C_2 = 0$, and  both feasibility and optimality gaps decay with $\calO(\slfrac{1}{K})$. The constant $C_0$ depends on the initial point. 
If we start from a close solution to the optimal point, $C_0$ will be small, and convergence will be faster. Constant $C$ depend on $-\lambda_{\min}(\bS)$. Note that $\bS$ depends on the gossip matrices and hence $-\lambda_{\min}(\bS)$ is a property of the communication network\footnote{It is well-known that the smallest non-zero eigenvalue of the Laplacian matrix of a graph reflects its overall connectivity \cite{chung2005laplacians}. One may expect that $\lambda_{\min}(\bS)$ plays a similar role.}. Hence, $C$ establishes a connection between the topology of the communication network  and the convergence rates. Finally, the rates explicitly depend on  $\eta, \mu$. Note that $\eta$ is not a parameter of the algorithm. It should be suitably selected in the theorem. As $\bS$ depends on $\eta$, there is a trade off between smaller $C$ and larger $\eta$.  As expected, larger step-sizes result in a faster decrease in the optimality gap. Additionally, the dependency of $\bS$ on $\eta$ implies  an implicit relation between $C$ and $\mu$ as well.
\end{remark}

\begin{remark}[ Feasible region for hyper parameters]\label{remark: feasible}
%
 
The requirement for the matrices to be proper (definition \ref{ass: big_assumption}) imposes restrictions on hyper parameters. In practice, it is not difficult to satisfy these restrictions and there exists a wide range of suitable parameters $(\eta, \rho, \alpha, \mu, \beta)$.
To show this, we conduct an illustrative numerical experiment. 
We assume a fully-connected setup with $M=4$  nodes, where the local objective functions are $L-$smooth with $L=0.1$. We compute $\bR,\bS$, and $\bP$ matrices, generate $\bF_\beta(z)$ given in \eqref{eq: F_define}, and numerically evaluate $Z_\beta$.
%
To find all feasible combinations of design parameters $(\rho,\alpha,\mu)$, we follow two steps. First, we guess a tuple $(\rho, \alpha, \eta)$, and identify step-sizes $\mu$ that satisfy  the conditions of definition \ref{ass: big_assumption}. In Fig.~\ref{fig: beta-mu}, we present the results for $\rho =10^{-6}, \alpha = 0.5$ and  $\eta=0.1$. We observe that there exists $\mu_0 < 4$ such that for every $\bar\mu<\mu_0$, 
the assumption holds.
In the second step, we choose $\bar\mu = 10^{-2}$ from the previous step, and find all possible combinations of $\alpha,\rho$ that satisfy the assumption. Fig.~\ref{fig: rho-alpha} demonstrates these combinations. For various step sizes $\bar\mu$, we repeat the second step many times to find other feasible regions. 
As seen, many combinations of design parameters guarantee that the matrices are proper.  
%
The presented result is a proof of concept, and a more elaborate study of the hyper-parameters is postponed to a different study. 
%
\begin{figure}
    \centering
	\subfloat[Find $\mu_0$.]{\includegraphics[width = 0.49\linewidth]{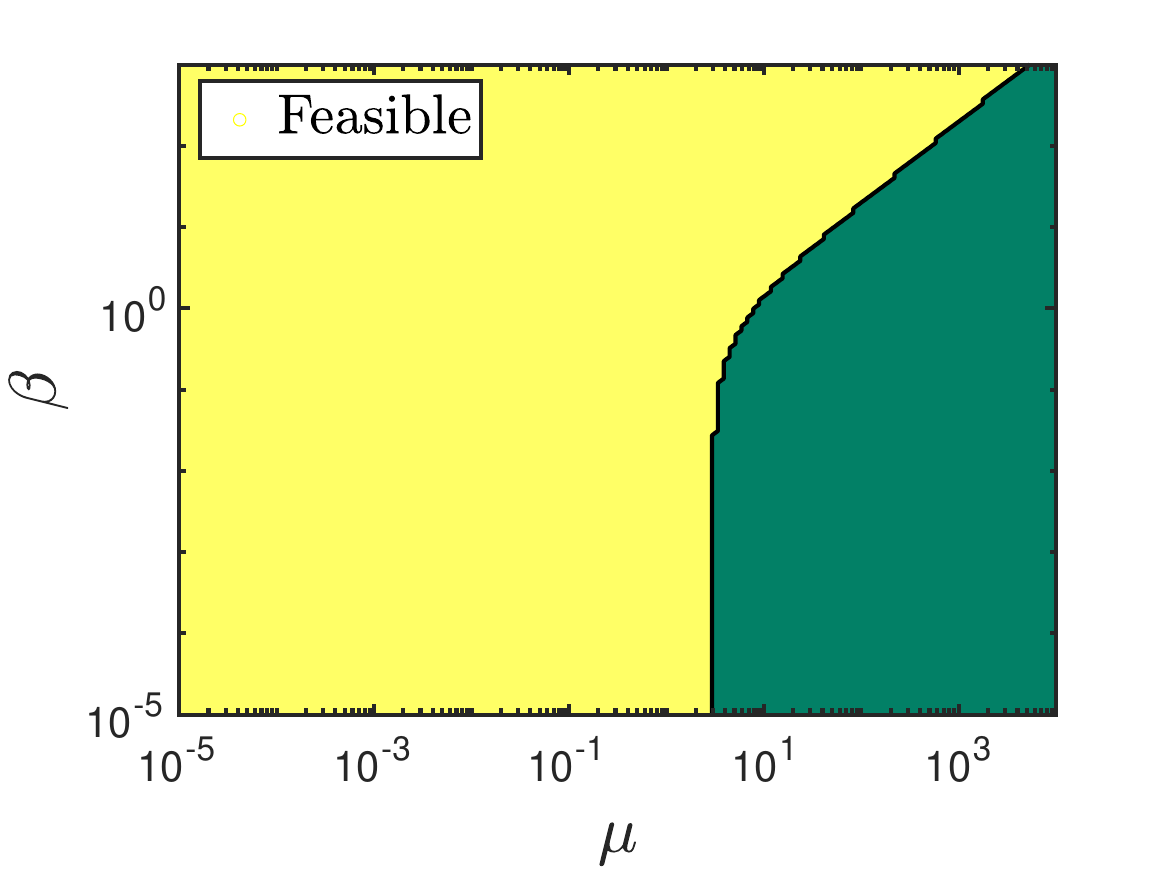} \label{fig: beta-mu}} 
	\subfloat[Feasible $(\rho,\alpha)$.]{\includegraphics[width = 0.49\linewidth]{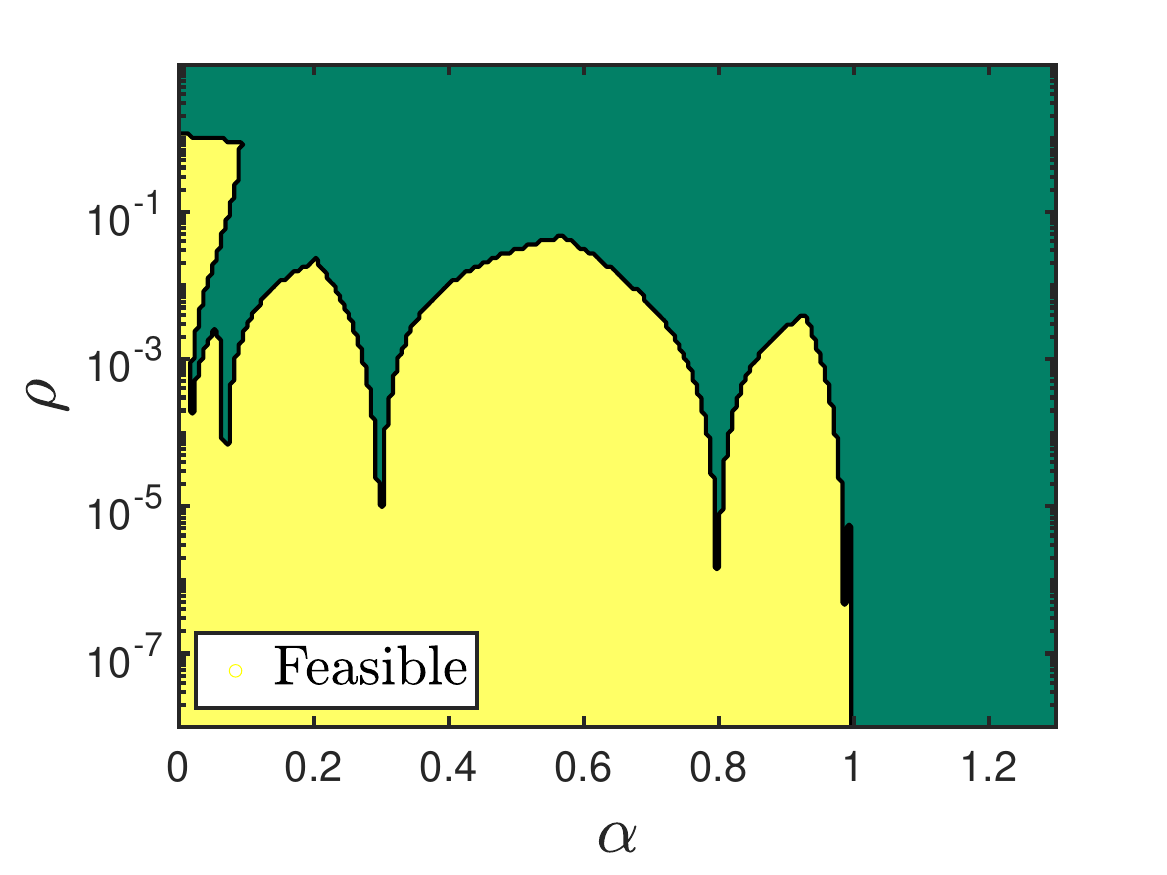} \label{fig: rho-alpha} }
    \caption{ Feasible region for the design parameters $(\rho,\alpha,\mu)$, 
    given the setup presented in the Remark \ref{remark: feasible}.}
    \label{fig: params}
\end{figure}
\end{remark}

\subsection{Proof of Theorem \ref{thm: rate} using ALB}\label{sec: proof}

The proof of Theorem \ref{thm: rate} consists of two major steps. In the first one a function $\Phi(\bPsi_k)$ and an aggregate term $\bar{A}_K$ is derived, as discussed in section \ref{subsec: ALB}.
In the second one, the aggregate term is lower bounded, leading to the desired result in \eqref{eq: aggregate_bound}.
These general steps can also be applied in analyzing other optimization algorithms with small modifications. For illustration, we present that case of GD  in section~\ref{subsec: gd}.
\subsubsection{Deriving the Aggregate Term}
We derive the relation in \eqref{eq:aggregate} and subsequently obtain $\bar{A}_K$ in multiple steps:

{\noindent \bf Combining relations from convexity, smoothness, and projection:} We start by defining
\begin{equation*}
    F^v(\bx) \coloneqq f^v(\bx)-f^v(\bx^*)-\langle\nabla f^v(\bx^*),\bx-\bx^*\rangle
\end{equation*}
and
$F^v_k\coloneqq F^v(\bx^v_k).$ Note that from the convexity of $f^v$, the values of $F^v(\bx)$, particularly $F^v_k$ are non-negative. From convexity, we also conclude that
\begin{alignat}{2}\label{eq:4}
    & F^v_k+\left\langle\nabla f^v(\bx^*)-\nabla f^v(\bx^v_k), \bx^v_k-\bx^*\right\rangle &&= \nonumber \\
    & f^v(\bx^v_k)-f^v(\bx^*)+\left\langle\nabla f^v(\bx^v_k), \bx^*-\bx^v_k\right\rangle&&\leq 0. 
\end{alignat}
From the $L-$smoothness property of $f^v$, we also obtain
\begin{alignat}{2}
        & F_{k+1}^v-F_k^v-\langle\nabla f^v(\bx_k^v)-\nabla f^v(\bx^*),\bx_{k+1}^v-\bx^v_k\rangle && = \nonumber \\
        & f^v(\bx^v_{k+1})-f^v(\bx^v_k)-\langle\nabla f^v(\bx_k^v),\bx_{k+1}^v-\bx^v_k\rangle&& \leq  \nonumber \\
        & \frac L2\left\|\bx_{k+1}^v-\bx_{k}^v\right\|^2. \label{eq:5}
\end{alignat}
Adding \eqref{eq:4} to \eqref{eq:5} yields
\begin{equation}\label{eq:6}
    F^v_{k+1}+\left\langle\nabla f^v(\bx^*)-\nabla f^v(\bx^v_k), \bx^v_{k+1}-\bx^*\right\rangle
    -\frac L2\left\|\bx_{k+1}^v-\bx_{k}^v\right\|^2\leq 0.
\end{equation}
Now, we define
$
T^v(\bx)\coloneqq -\langle\bn^v,\bx-\bx^*\rangle
$
and $T_k^v\coloneqq T^v(\bx^v_{k}),$ where $\bn^v \in \partial I_{S^v}(\bx^*)$. The fact that $\bx_{k+1}^v\in S^v$ yields $T^v_{k+1}\geq 0$. Note that from $\bx^v_{k+1}=\mathrm{P}_{S^v}(\bz^v_{k+1})$ and the fact that $\bx^*\in S^v$, we have
$
\langle\bx^*-\bx^v_{k+1},\bz_{k+1}^v-\bx^v_{k+1}\rangle\leq 0,
$
which can also be written as
\begin{equation}\label{eq:3}
    \mu T^v_{k+1}+\langle\bx^*-\bx^v_{k+1},\bz_{k+1}^v-\bx^v_{k+1}-\mu\bn^v\rangle\leq 0.
\end{equation}
Multiplying \eqref{eq:6} by $\mu$, adding to \eqref{eq:3}, we have
\begin{equation}\label{eq:sumall}
    \mu(F^v_{k+1}+T^v_{k+1}) -\frac{L\mu}{2}\left\|\bx^v_{k+1}-\bx^v_k\right\|+  \Big\langle\bx^*-\bx^v_{k+1},  \bz^v_{k+1}-\bx^v_{k+1} + \mu\big(\nabla f^v(\bx^v_k)-\nabla f^v(\bx^*) - \bn^v\big)\Big\rangle \leq 0.
\end{equation}
\noindent {\bf Plugging algorithm's dynamics into existing relations:}
To keep the notation compact, we set $\sum_{\ngb}(.) = \sum_u(.)$. Now, we plug the definition of $\bz^v_{k+1}$ into \eqref{eq:sumall} as
\begin{multline}
    \mu \left(F_{k+1}^v+T_{k+1}^v\right)  -\frac {L\mu}2\left\|\bx_{k+1}^v-\bx_{k}^v\right\|^2 
     + \Big\langle\bx^*-\bx^v_{k+1}, \bx_k^v-\\\suml_{u}w_{vu}\bx^u_k-\bx^v_{k+1}  +\mu(\bg^v_k-\nabla f^v(\bx^*)-\bn^v)\Big\rangle\leq 0. \label{eq:7}
\end{multline}
We also replace the expression of $\bz^v_{k+1}$ in \eqref{eq: g_update}, leading to:
\begin{equation}\label{eq:8}
    \bg_{k+1}^v=\bg_k^v+\frac\rho\mu\left(\bx_k^v-\suml_{u}w_{vu}\bx^u_k-\bx^v_{k+1}\right)+\alpha\bdelta_k^v,
\end{equation}
where $\bdelta_k^v=\bh_k^v-\bg_k^v$ and follows the following dynamics
\begin{equation}\label{eq:9}
      \bdelta_{k+1}^v= (1-\alpha)\bdelta_{k}^v-\suml_uq_{vu}\bdelta^u_k -\frac\rho\mu\left(\bx_k^v-\suml_{u}w_{vu}\bx^u_k-\bx^v_{k+1}\right).  
\end{equation}
\noindent {\bf Creating Lyapunov functions:}
For simplicity, we define $\tlbx_{k}^v\coloneqq \bx^v_k-\bx^*$ and $\tlbg^v_k\coloneqq \bg^v_k-\nabla f^v(\bx^*)-\bn^v$. We can rewrite \eqref{eq:8} and \eqref{eq:9} as
\begin{equation}\label{eq:8p}
    \tlbg_{k+1}^v=\tlbg_k^v+\frac\rho\mu\left(\tlbx_k^v-\suml_{u}w_{vu}\tlbx^u_k-\tlbx^v_{k+1}\right)+\alpha\bdelta_k^v,
\end{equation}
\begin{equation}
    \bdelta_{k+1}^v= (1-\alpha)\bdelta_{k}^v-\suml_uq_{vu}\bdelta^u_k  -\frac\rho\mu\left(\tlbx_k^v-\suml_{u}w_{vu}\tlbx^u_k-\tlbx^v_{k+1}\right), \label{eq:9p}
\end{equation}
Moreover, by plugging the new definitions in \eqref{eq:7}, summing over $v\in\calV$ and $k=0,\ldots,K-1$, and adding and removing $\frac{\eta}2\suml_{u,v}\|\tlbx_{k+1}^u-\tlbx_{k+1}^v\|^2 $, we have
\begin{align}
    &\suml_{k=0}^{K-1}\left(\mu \suml_v\left(F_{k+1}^v+T_{k+1}^v\right)+\frac{\eta}2\suml_{u,v}\|\tlbx_{k+1}^u-\tlbx_{k+1}^v\|^2\right) \nonumber\\
    & - \suml_{k=0}^{K-1}\suml_v\left\langle\tlbx^v_{k+1},\tlbx_k^v-\suml_{u}w_{vu}\tlbx^u_k-\tlbx^v_{k+1}+\mu\tlbg^v_k\right\rangle \nonumber\\
    &  -\frac {L\mu}2 \suml_{k=0}^{K-1}\suml_v \left\|\tlbx_{k+1}^v-\tlbx_{k}^v\right\|^2 -\frac{\eta}2\suml_{k=0}^{K-1}\suml_{u,v}\|\tlbx_{k+1}^u-\tlbx_{k+1}^v\|^2 \leq 0. \label{eq:7p}
\end{align}

The first summation in \eqref{eq:7p} contains several positive-definite functions of interest for convergence. These terms correspond to $\sum_{k=0}^{K-1}\Phi(\bPsi_k)$ in \eqref{eq: l3}.
We will consider the last three summations in \eqref{eq:7p} as  $\bar{A}_K$, i.e. the aggregated term. We show an asymptotic lower bound for $\bar{A}_K$, i.e. we show that there exists a constant $C_1$
such that for a sufficiently large $K$, $\bar{A}_K\geq -C_1$. 
Then, we conclude from \eqref{eq:7p} that 
\begin{equation*}
    \suml_{k=0}^{K-1}\Bigg(\mu \suml_v\left(F_{k+1}^v+T_{k+1}^v\right)
     +\frac{\eta}2\suml_{u,v}\|\tlbx_{k+1}^u-\tlbx_{k+1}^v\|^2\Bigg)\leq C_1.
\end{equation*}

\subsubsection{Bounding the Aggregate Term $\bar{A}_K$}
The main tool for bounding $\bar{A}_K$ is the following lemma, the proof of which is presented in Appendix \ref{appendix: proofLem1}.
\begin{lemma}\label{lem1}
 Let $\bR,\bS$ and $\bP$ be proper according to definition~\ref{ass: big_assumption}. Assume matrices $\{\bPsi_k\}$ are generated by
\begin{equation}
    \label{eq: linear_dynamics}
    \bPsi_{k+1} = \bR\bPsi_k + \bP\tilde\bX_{k+2} ,  \qquad k=0,\ldots,K-2.
\end{equation}
Then, for $C>-\lambda_{\mathrm{min}}(\bS)$  and a sufficiently large $K$, $\bar{A}_K \coloneqq \sum_{k=0}^{K-1}\langle \bPsi_k,\bS\bPsi_k  \rangle$ is bounded below by $ -C\nf{\bPsi_0}^2$. 
\end{lemma}

To find the bound $C_1$, we simply need to write the dynamics in terms of Lemma \ref{lem1}. We start by simplifying the notation in \eqref{eq:8p}, \eqref{eq:9p}, and \eqref{eq:7p}. Let us introduce 
\begin{equation}
    \bPsi_k \coloneqq \left[
\begin{array}{cccc} 
\tlbX_{k+1}^T & \tlbX_{k}^T&\tlbG_{k}^T & \bDelta_{k}^T
\end{array}
\right]^T,
\end{equation}
where $\tlbX_{k}, \tlbG_{k}, \bDelta_{k}$ are matrices with $\tlbx^v_{k}, \tlbg^v_{k}, \bdelta^v_{k}$ as their $v^\tth$ row, respectively. We may write \eqref{eq:8p} and \eqref{eq:9p} as
 the linear dynamical system presented in \eqref{eq: linear_dynamics},
where $\bR$ and $\bP$ are defined in \eqref{eq: R_define} and \eqref{eq: P_define}, respectively.
We may also define
\begin{equation}\label{eq: AK_def}
    \bar{A}_K=\suml_{k=0}^{K-1}\left\langle\bPsi_k,\bS\bPsi_k\right\rangle,
\end{equation}
where $\bS $ is defined in \eqref{eq: S_define}.
 Since there exists a set of constants $(\eta,  \rho, \alpha, \mu, \beta)$ such that the defined $\bR, \bS$, and $\bP$ matrices satisfy Assumption \ref{ass: big_assumption}, $\bar{A}_K$ is lower bounded by $-C_1 \coloneqq -C\nf{\bPsi_0}^2$ by Lemma~\ref{lem1}.

{\noindent\textbf{Providing the rates of convergence:}}
Lastly, by defining $\barbx^v_K=\frac 1K\sum_{k=0}^{K-1}\bx^v_k$ and noting that each term in the summation over $k$ is a fixed convex function of $\{\bx_{k+1}^v\}_v$, we may recall Jensen's inequality to conclude
 \begin{equation}
 \mu \suml_v\left(F^v(\barbx^v_K)+T^v(\barbx^v_K)\right)
 +\frac{\eta}2\suml_{u,v}\|\barbx_{K}^u-\barbx_{K}^v\|^2
    \leq \frac{C_1}{K}.
\end{equation}
By defining $\barbx_K=\frac 1M\sum_v\barbx^v_K$ and considering $C_1 = C\nf{\bPsi_0}^2$, we conclude that for all $v$, 
\[
\|\barbx_K-\barbx^v_K\|^2 \leq \frac{C_0C}{\eta MK}, 
\]
where constant $C_0 = \nf{\bPsi_0}^2$ depends on the distance between the start point of the algorithm and the optimal solution.  Since $\barbx^v_K\in S^v$, we also conclude that 
\[
\mathrm{dist}^2(\barbx_K, S^v) \leq  \frac{C_0C}{\eta MK}. 
\]
Finally,
\begin{alignat}{2}
    &\bigg|\suml_v && f^v(\barbx^v_K)-\suml_vf^v(\bx^*)\bigg| \nonumber \\ & && \leq \frac{C_0C}{\mu K}+\suml_v\left|\langle\bn^v+\nabla f^v(\bx^*),\barbx_K^v-\barbx_K\rangle\right| \nonumber\\
    & && \leq\frac{C_0C}{\mu K}+\sqrt{\suml_v\|\bn^v+\nabla f^v(\bx^*)\|^2}\sqrt{\suml_v\|\barbx_K^v-\barbx_K\|^2} \nonumber \\
    & && \leq \frac{C_0C}{\mu K}+\sqrt{\frac{C_0CC_2}{\eta M K}}. \nonumber \qed
\end{alignat}

\subsection{Convergence rate of Gradient Descent}\label{subsec: gd}
 
In this section, we present an alternative convergence proof for the well-known GD algorithm with a convex and smooth function, using ALB methodology. We recover the classical convergence result of $\calO(\slfrac{1}{K})$ \cite{beck2009gradient}.

 In light of DAGP analysis,
\eqref{eq:6} holds for GD by considering a single node, which simplifies this relation considerably.  By plugging the gradient descent dynamics in, i.e. $\bx_{k+1} = \bx_k - \mu\nabla f(\bx^v_k)$,  and defining $\tilde\bx = \bx-\bx^*$, we have 
\begin{equation}\label{eq: gd1}
    f(\bx_{k+1}) - f(\bx^*) + \frac{1}{\mu}\langle \tilde\bx_{k+1} - \tilde\bx_k, \tilde\bx_{k+1} \rangle  - \frac{L}{2}\left\|\tilde\bx_{k+1}-\tilde\bx_k\right\|_2^2 \leq 0.
\end{equation}
Here, there is no need to consider other inequalities since the setup is neither distributed nor constrained. By summing over all iterations and defining $\barby_K = \frac{1}{K} \sum_{k=0}^{K-1}\bx_k$, we have 
\begin{equation}\label{eq: gd2}
    f(\barby_K) - f(\bx^*) \leq \frac{-\bar{A}_K}{K},
\end{equation}
where the summation of the two last terms on the left-hand side of \eqref{eq: gd1} over all iterations is called $\bar{A}_K$, i.e.
\begin{equation}
    \bar{A}_K = \suml_{k=0}^{K-1} \frac{1}{\mu}\langle \tilde\bx_{k+1} - \tilde\bx_k, \tilde\bx_{k+1} \rangle - \frac{L}{2}\left\|\tilde\bx_{k+1}-\tilde\bx_k\right\|_2^2.
\end{equation}
Following the terminology of bounding $\bar{A}_K$, we can define $\bPsi_k = \left[ \tilde\bx_{k+1}  \; \tilde\bx_k \right]^T$ and the following $\bR, \bP$ and $\bS$ matrices such that \eqref{eq: linear_dynamics} and \eqref{eq: AK_def} hold.
\begin{equation}\label{eq: gd_matrices}  
    \bR = \left[ 
    \begin{array}{cc}
        0 & 0 \\
        1 & 0
    \end{array}
    \right]
    \bP = \left[
    \begin{array}{c}
         1  \\
         0
    \end{array}
    \right]
    \bS = \left[ 
    \begin{array}{cc}
        \frac{2-L\mu}{2\mu} & \frac{L\mu -1}{2\mu} \\
        \frac{L\mu-1}{2\mu} & -\frac{L}{2}
    \end{array}
    \right]
\end{equation}
The following Lemma shows if $\mu<\slfrac{1}{L}$, 
 the matrices in \eqref{eq: gd_matrices} satisfy Assumption \ref{ass: big_assumption}. Therefore, $\bar{A}_K$ has a lower bound by Lemma \ref{lem1}, and the algorithm has a convergence rate of the order $\calO(\slfrac{1}{K})$.

\begin{lemma}  
 Let $\mu<\slfrac{1}{L}$. Then, the matrices defined in \eqref{eq: gd_matrices} satisfy Assumption \ref{ass: big_assumption}.
\end{lemma}
\begin{proof}
To compute the set $\{z|\det(\bF_\beta(z)) = 0\}$, we can find $z_i$s such that
\begin{equation}
\bF_\beta(z) \left[
\begin{array}{c}
     \bE \\
     \bM
\end{array}
\right] = \left[
\begin{array}{c}
     \bO  \\
     \bO 
\end{array}
\right],
\end{equation}
for non-zero matrices $\bE$ and $\bM$. By replacing the definition of $\bF_{\beta}(z)$ from \eqref{eq: F_define}, and simple computations, we have
\begin{equation}
    \frac{1}{\beta} (z^{-1}\bI-\bR^T)^{-1} \bS (z\bI-\bR)^{-1}\bP\bP^T \; \bM = \bM. 
\end{equation}
For the sake of simplicity, by defining $\bM \coloneqq \bM_1\bM_2$, where $\bM_2 = \bP^T\bM$, we may write
\begin{equation}
    \bM_2 =  \frac{1}{\beta} \bP^T(z^{-1}\bI-\bR^T)^{-1} \bS (z\bI-\bR)^{-1}\bP \; \bM_2.
\end{equation}
Since $\bM$ is non-zero, and consequently $\bM_2$ is non-zero,  the problem changes to finding $z_i$s such that $\beta$ is an eigenvalue of $\bP^T(z^{-1}\bI-\bR^T)^{-1} \bS (z\bI-\bR)^{-1}\bP$. For the matrices defined in \eqref{eq: gd_matrices}, we have
\begin{equation}
    z^2 - 2\frac{L\mu-1-\beta\mu}{L\mu-1}z +1 = 0.
\end{equation}
This equation has two different real positive roots for every $\beta>0$ and $\mu<\slfrac{1}{L}$.
\end{proof}

\section{ Experimental Results}\label{sec: exp_results}
To evaluate the performance of DAGP in practice and compare it with the state-of-the-art algorithms, three experiments are considered. Our first experiment compares DAGP's convergence properties with those of DDPS~\cite{xi2016distributed}. 
In the second experiment, we consider the classical regularized logistic regression problem. This problem is unconstrained, and many distributed algorithms can be used to solve it with a provable geometric convergence rate. 
The last experiment demonstrates the practicality of DAGP in solving optimal transport for domain adaptation.

 In all simulations, algorithms' parameters are hand-tuned for peak performance.
In our simulations, we respectively use 
$\slfrac{\bL^{\text{in}} }{ 2d_{\text{max}}^{\text{in}} }$ and $\slfrac{\bL^{\text{out}} }{ 2d_{\text{max}}^{\text{out}} }$
as zero row sum and zero column sum matrices, where $d_{\text{max}}^{\text{in}}$ and $d_{\text{max}}^{\text{out}}$ are the largest diagonal elements of $\bL^{\text{{in}}}$ and $\bL^{\text{{out}}}$. By subtracting these matrices from the identity matrix, our row stochastic and column stochastic matrices  are computed.
We repeated each experiment multiple times, but only one instance from each experiment is presented as the difference between individual runs was minimal.

\subsection{Numerical Results, Constrained Optimization}
 
In this experiment, we assume a set of $M$ nodes collaborating to solve problem \eqref{eq: dec_constrained}, where
\begin{align}
f^v(\bx) & =  \log\big(\cosh( \ba_v^T\bx - b_v)\big), \\
S^v  & = \{ \bx \; |\;  \bc_v^T\bx - d_v \leq 0 \}.
\label{eq: synthetic}
\end{align}
The reason for the choice of the above synthetic objectives and constraints is to have functions, which are smooth, but not strongly-convex, and to have constraints, which projection onto them is simple to compute. The optimal value of the unconstrained problem is 0 if there exists a point $\tilde\bx$ such that $\bA\tilde\bx = \bb$, where $\bA=[\ba_v^T]\in\bbR^{M\times m}$, and $\bb=[b_v] \in \bbR^M$. If such a point exists, the objective functions are strongly-convex near $\tilde\bx$. Regarding the constraints, the feasible set is an intersection of $M$ halfspaces in $\bbR^m$, defined by $\bC\bx-\bd \leq \bzero$, where $\bC=[\bc_v^T]\in\bbR^{M\times m}$ and $\bd = [d_v] \in \bbR^M$.
The feasible set will become larger by increasing $m$ since the nullity of $\bC$ will increase. 
In this experiment, we study two setups. In the first setup, we set $M=10$ and $m=20$ to have a large feasible set, which contains $\tilde\bx$, and the second setup, where we set $M=20$ and $m=10$ to have a feasible set not including $\tilde\bx$.
In both setups, the nodes' local solutions and the coefficients $\ba_v, \bc_v$, and $b_v$  are generated from a zero mean and unit variance normal distribution, and $d_v$ coefficients are selected such that the feasible set not being empty.

To study the convergence properties of DAGP in comparison to DDPS, objective value and feasibility gap computed at the average of all nodes solutions $\bar\bx=\frac1M\sum_{v\in\calV}\bx^v$ are shown in Fig.~\ref{fig:toy1} and Fig.~\ref{fig:toy2}, respectively for the first and second setups. As a remark, DDPS has been proposed for an optimization problem constrained to a commonly known constraint set. In this paper, for the sake of comparison with DAGP, we modify DDPS to be applicable to the setup with distributed constraints, such that at each iteration, each node projects on its own local constraint set instead of the global constraint set.

The results indicate a linear convergence rate for the optimality gap (since the minimum is zero) in the first setup, 
which is due to the inclusion of $\tilde\bx$ in the feasible set.  On the other hand, in the second setup, some constraints are active, and since $\tilde\bx$ is not in the feasible set, the objective value is not zero. However, we note that DAGP converges to the optimal solution remarkably faster due to using a fixed step-size. Fig.~\ref{fig: fesgpN10} and Fig.~\ref{fig: fesgpN20} show that the DAGP solution rapidly moves to the feasible set, and the solution is completely in the feasible set, unlike DDPS in which the solution becomes only close to the feasible set, especially in the second setup. This shows that the modification to DDPS is not working, and our algorithm is the first working algorithm for a setup with distributed constraints and directed graphs. 

To show consensus among nodes, the squared Euclidean 
distance between $\bx^0$ and the solutions of five random nodes is depicted in Fig.~\ref{fig: cons}. 
Moreover, to show the consensus solution is also the optimal solution, $\nt{\sum_{v\in\calV}\bg^v}^2$ is computed and shown in Fig.~\ref{fig: gsum}. These metrics converge to zero and corroborate convergence to a consensus optimal solution.

\begin{figure}[!t]
	\centering
	\subfloat[Objective Value]{\includegraphics[width = 0.48\linewidth]{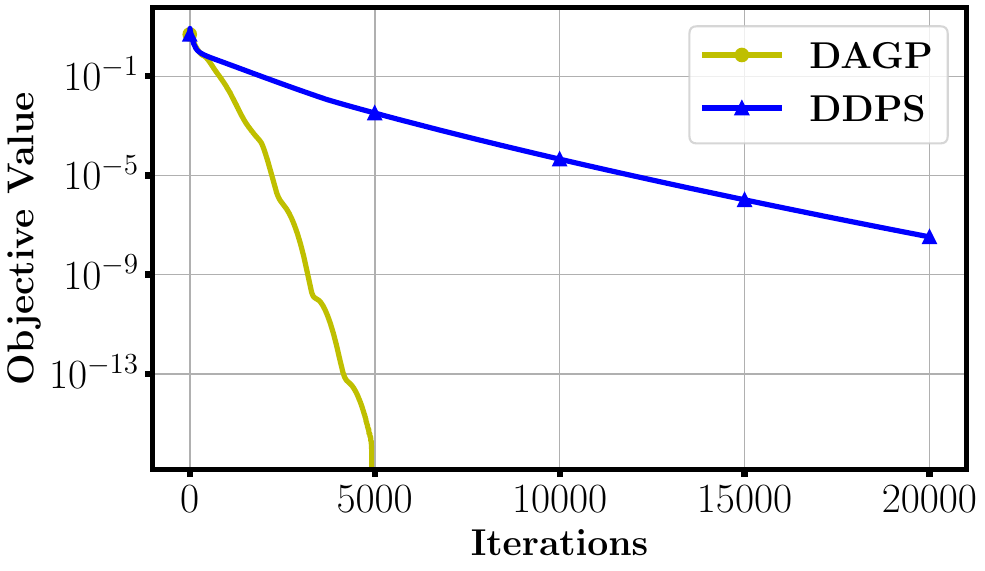} \label{fig: objectiveN10}} 
	\subfloat[Feasibility Gap]{\includegraphics[width = 0.48\linewidth]{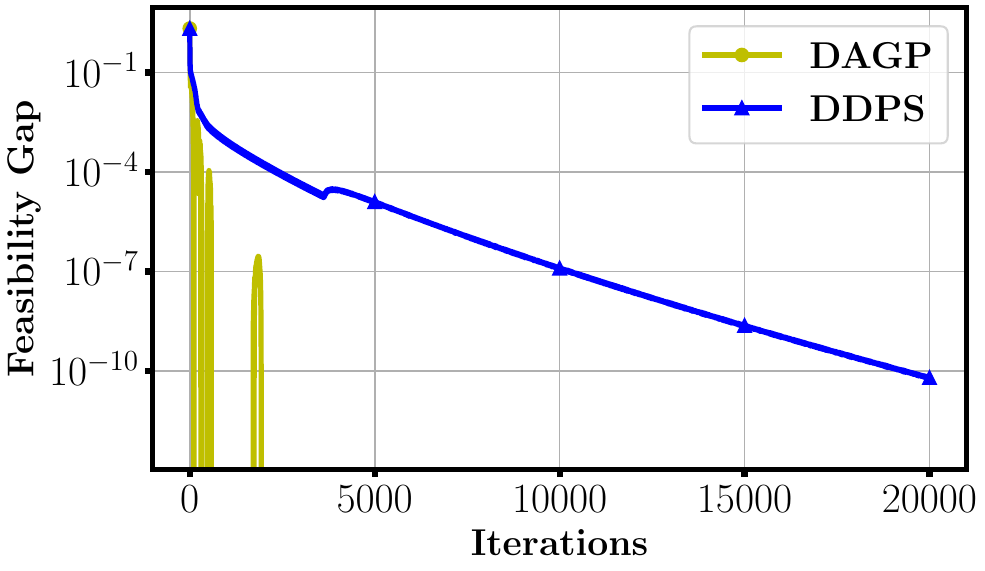}   \label{fig: fesgpN10} }

	\subfloat[Consensus solution]{\includegraphics[width = 0.48\linewidth]{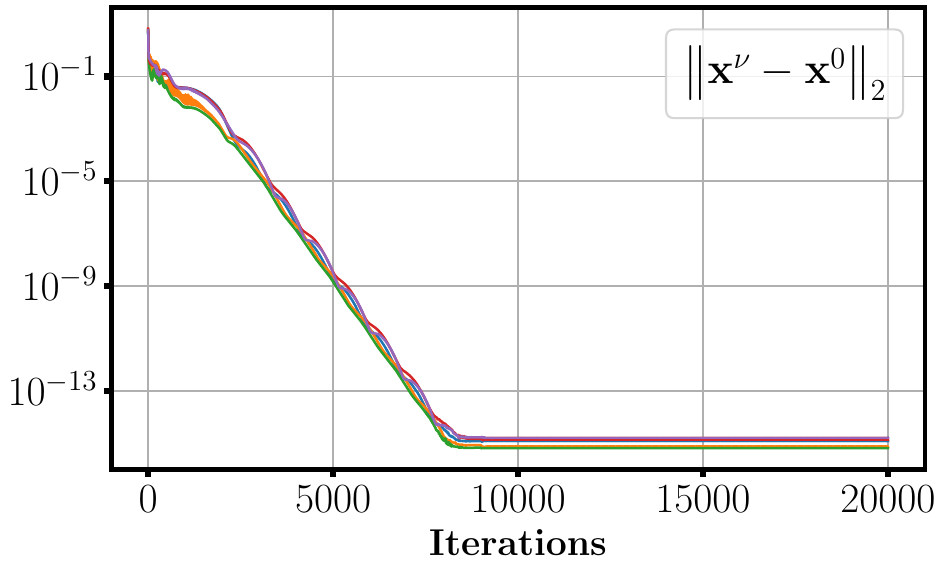}  \label{fig: cons}} 
	\subfloat[Optimal solution]{\includegraphics[width = 0.48\linewidth]{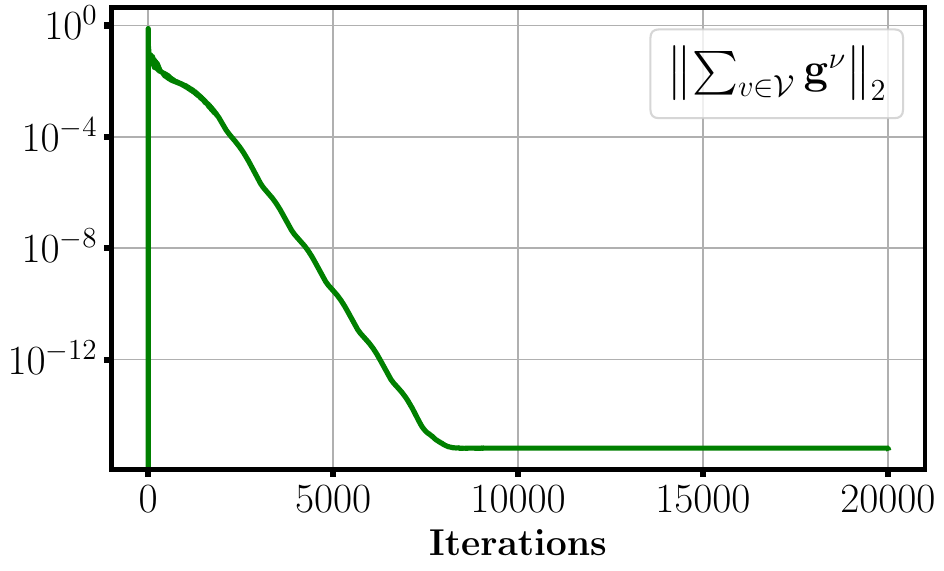} \label{fig: gsum}}
	\caption{First setup results with $m=20$ and $M=10$. Local variables move to a consensus and optimal stopping point in DAGP, while they move to  a sub-optimal point in DDPS.}
	\label{fig:toy1}
\end{figure}

\begin{figure}
    \centering
	\subfloat[Objective Value]{\includegraphics[width = 0.475\linewidth]{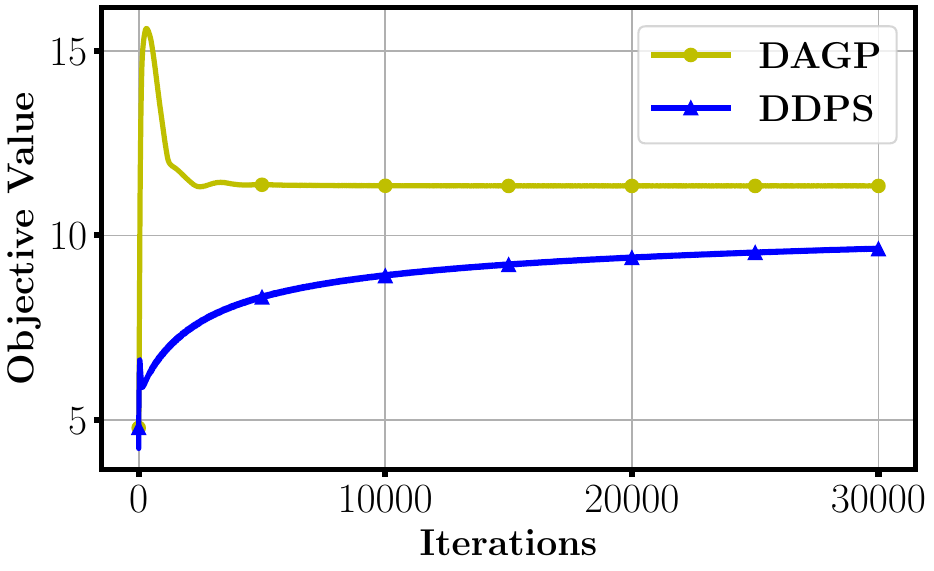} \label{fig: ObjectiveN20}} 
	\subfloat[Feasibility Gap]{\includegraphics[width = 0.49\linewidth]{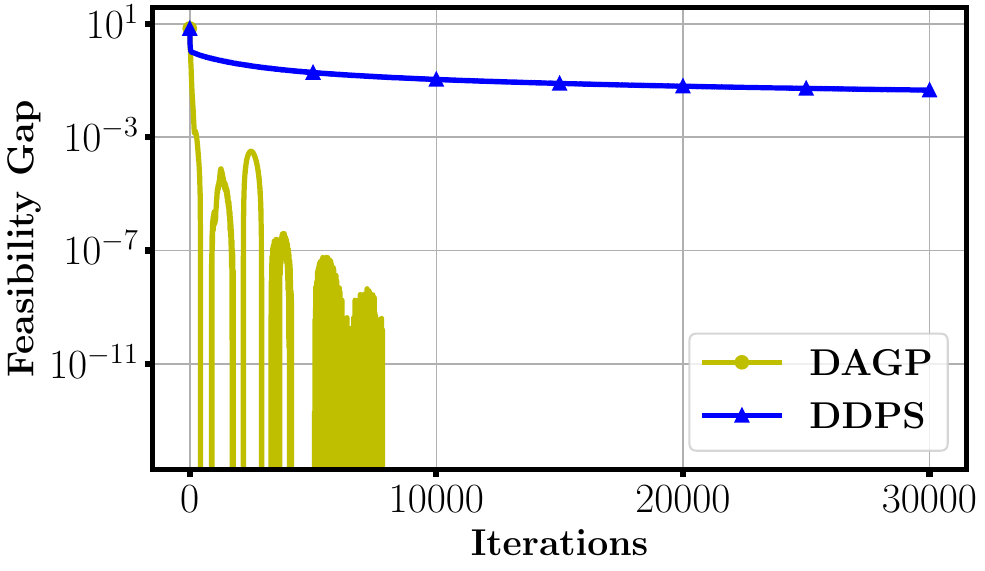} \label{fig: fesgpN20} }
    \caption{Second setup results with $m=10$ and $M=20$. As DDPS has not converged to a feasible point, it achieves a smaller function value.}
    \label{fig:toy2}
\end{figure}

\subsection{Numerical Results, Unconstrained Optimization}
We consider the classical unconstrained logistic regression problem with $\ell_2$ regularization written as
\begin{equation}
\label{eq: lr}
    \minl_{\bw} \;\; \frac{1}{N_s} \suml_{i = 1}^{N_s}  \log \left( 1 + \exp{(-y_i\bx_i^T\bw)} \right) + \frac{\lambda}{2}\nt{\bw}^2,
\end{equation}
where 
$\left\{ \bx_i, y_i \right\}_{i=1}^{N_s} \subseteq \bbR^{m} \times \{+1, -1\}$ is the set of training samples with their labels, 
$N_s$ is the total number of training samples,  
and $\lambda$ is the regularization parameter. 
In the decentralized formulation with $M$ nodes, each node local objective function can be written as
\begin{equation}
    f^v(\bw) = \frac{1}{N_v} \suml_{l=1}^{N_v} \log \left(  1+ e^{(-y_l\bx_l^T\bw)}   \right) + \frac{\lambda}{2M}\nt{\bw}^2,
\end{equation}
with $N_v$ local training samples, considering the dataset is distributed equally between nodes, i.e. $N_s = MN_v$.
In this experiment, we use $10000$ training samples corresponding to the two first digits in the MNIST dataset~\cite{lecun-mnisthandwrittendigit-2010}. We consider static, directed, and strongly-connected random graph with $M=20$ nodes as the communication network, and we set the regularization parameter to ${1}/{N_s}$. 

\begin{figure}
    \centering
    \includegraphics[width = 0.7\linewidth]{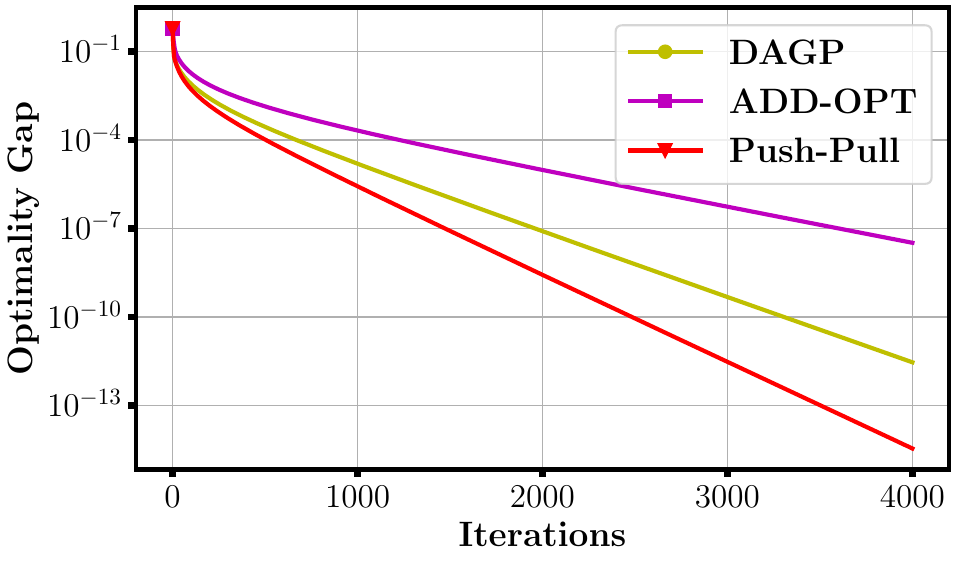}
    \caption{Convergence rate comparison of decentralized unconstrained algorithms over directed graphs. step-sizes are hand-tuned. 
    }
    \label{fig: LR}
\end{figure}

We compare the performance of DAGP with the Push-Pull and ADD-OPT algorithms. Centralized gradient descent is used to determine the optimal value $f^*$. The  optimality gap defined as $\sum_{v\in\calV} f^v(\bar\bx) - f^*$, where $\barbx$ is the average of all nodes solutions, is shown in Fig~\ref{fig: LR}. 
All the algorithms have a linear convergence rate. 
With hand-tuned step-sizes, Push-pull achieves the fastest convergence, followed by DAGP with a slight difference. ADD-OPT is the slowest one.  
Further, we observe that DAGP and Push-Pull are robust to graph connectivity, whereas ADD-OPT fails in simulations for graphs with low edge connectivity.  
The practical applicability of our algorithm is evident since it performs on par with unconstrained algorithms, and it is competitive with respect to constrained problems.

\subsection{  Application in Optimal Transport }

OT is a fundamental problem in applied mathematics with applications in control theory~\cite{chen2016optimal, chen2021optimal}, economics~\cite{galichon2018optimal}, and data science~\cite{peyre2019computational}.
The goal is to find an efficient transport plan that transports a source probability distribution $\bpi_s \in \bbR^{n_s}$ to a target probability distribution $\bpi_t\in \bbR^{n_t}$, such that the cost of transportation measured by a function $c$ is minimized. Discrete Kantorovich's formulation of OT considers the following convex optimization problem~\cite{villani2021topics}
\begin{equation}\label{eq: kantorovich}
     \argmin_{\bX}  \langle\bC,\bX \rangle \quad \sbjt \;
         \bone^T\bX = \bpi_t^T, \; \bX\bone=\bpi_s, \; x_{kl} \geq 0,
\end{equation}
where $\bX$ is a joint probability distribution, called transport plan, and $\bC$ is the cost matrix. This problem is a linear program, and its exact solution is likely sparse. The computation of the exact solution by linear programming (LP) solvers is expensive. In response, different regularized versions of OT have been proposed in the literature, e.g. the Sinkhorn algorithm \cite{cuturi2013sinkhorn}, resulting in an inexact (non-sparse) solution.
Our decentralized optimization framework is an alternative to find the exact solution of OT in a computationally efficient way.
In this regard, we reformulate \eqref{eq: kantorovich} within our framework as:
\begin{eqnarray}
\argmin_{\bX} & \; \frac{1}{2}\suml_{l=1}^{n_s} \langle \bC_{:,l},\bX_{:,l}\rangle + \frac{1}{2}\suml_{k=1}^{n_t} \langle\bC_{k,:},\bX_{k,:}\rangle, \label{eq: ot_dec}\\ \label{eq: dec_ot}
\sbjt & \;\;\;
\begin{cases} 
\bone^T_{n_s}\bX_{:,l}   =  \pi_t(l), \qquad\;\; l = 1,\dots,n_t   \\
\bX_{k,:}\bone_{n_t}     =  \pi_s(k), \qquad k = 1,\dots,n_s  \\
x_{kl} \geq 0, \qquad\qquad\quad\;\;\:\: \forall l,k
\end{cases}\nonumber 
\end{eqnarray}
where $\bX_{:,l}$ shows the $l^\tth$ column, while $\bX_{k,:}$ shows the $k^\tth$ row of the matrix. 
The objective function is linear, hence, smooth and convex. 
Moreover, the constraints are convex probability simplicies, whose orthogonal projection has been extensively studied~\cite{condat2016fast}. Therefore this problem satisfies our setup assumptions.

Our experiments on OT is divided into two parts. First, we show the computational efficiency of decentralized algorithms, specifically DAGP,  to solve the optimal transport problem~\eqref{eq: ot_dec}. Then, we demonstrate its practicality for domain adaptation on real datasets. 
In the first part, we consider $n_s=n_t=n$ bins that approximate normal source and target distributions, which are respectively $\calN({1}/{3}, {1}/{4})$ and $\calN(\slfrac{2}{3}, \slfrac{1}{8})$, in the interval of $[0,1]$. 
The Euclidean distance is used as a cost metric.  We find the optimal plan using DAGP algorithm over a fully-connected network with $M$ agents, as well as in a single-machine setup using the default solver from the Python CVXOPT library~\cite{cvxopt}.
The objective and constraints in the decentralized setup are distributed such that each node has access to several rows or columns in a way that the number of terms at each node is minimized. 
Decentralized simulations are performed in a synthetic environment with ideal communications,  and the algorithms are assumed to converge if the change in the objective is less than $10^{-7}$ and the distance to every  constraint is less than $10^{-4}$. In the centralized setting, we relax the equality constraints to help the linear program solver find a feasible solution, i.e. the equality constraints become
\begin{alignat*}{2}
    & \left| \suml_{k=1}^{n_s} x_{kl} - \pi_t(l) \right| < 10^{-4},  && \qquad  l=1,\dots,n_t \\
    & \left| \suml_{l=1}^{n_t} x_{kl} - \pi_s(k) \right| < 10^{-4}.  && \qquad  k=1,\dots,n_s 
\end{alignat*}
As a rough measure of the complexities of the algorithms, their run time averaged  over $100$ experiments is reported in Fig.~\ref{fig: run_time}. 
We observe that solving the optimal transport problem using  linear programming solvers becomes impractical for large $n$ even by relaxing the equality constraints, while the decentralized DAGP method still performs in a reasonable amount of time. 

\begin{figure}
    \centering
    \includegraphics[width = 0.7\linewidth]{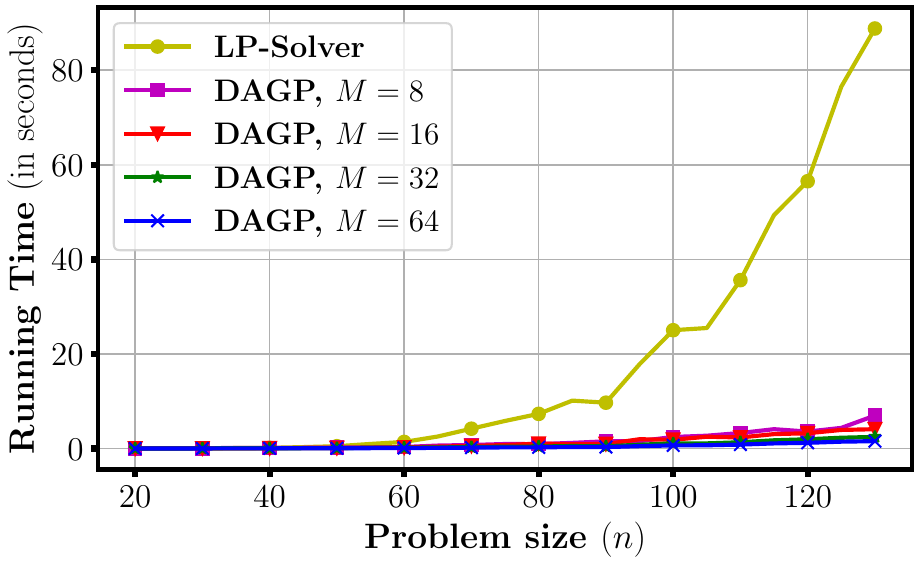}
    \caption{Average required time to solve the optimal transport problem in single-machine and decentralized settings. Communications are ideal. }
    \label{fig: run_time}
\end{figure}

In the second part, we consider the application of optimal transport in domain adaptation \cite{7586038}. 
%
In this experiment, we consider two classical datasets used for domain adaptation, namely MNIST~\cite{lecun-mnisthandwrittendigit-2010} and USPS~\cite{uspsdataset}. We reshape all images into vectors of size $256$, and sample $10000$ data points from the MNIST dataset for better visualizations. We use the Euclidean distance as the cost metric.  Considering the distribution of data points is uniform, we compute the optimal transport plan using DAGP and Sinkhorn~\cite{cuturi2013sinkhorn} algorithms. 
The resulting OT plans are depicted in Fig.~\ref{fig: OT} with a precision of $10^{-5}$.
DAGP solves \eqref{eq: ot_dec} to find a sparse transport plan, while the Sinkhorn algorithm solves the neg-entropy regularized version of the problem and clearly leads to a lower level of sparsity in the transport plan, making the classes less distinguishable.

To conclude, in applying optimal transport for domain adaptation, DAGP finds an exact sparse transport plan, and it can be easily scaled by introducing more agents. At the same time, CVXOPT solvers are impractical due to the size of the problem, as shown in the first part of the experiment, and computationally efficient algorithms like Sinkhorn find inexact solutions to compensate for slow convergence.

\begin{figure}[t!]    
    \centering
    \includegraphics[width = 0.9\linewidth]{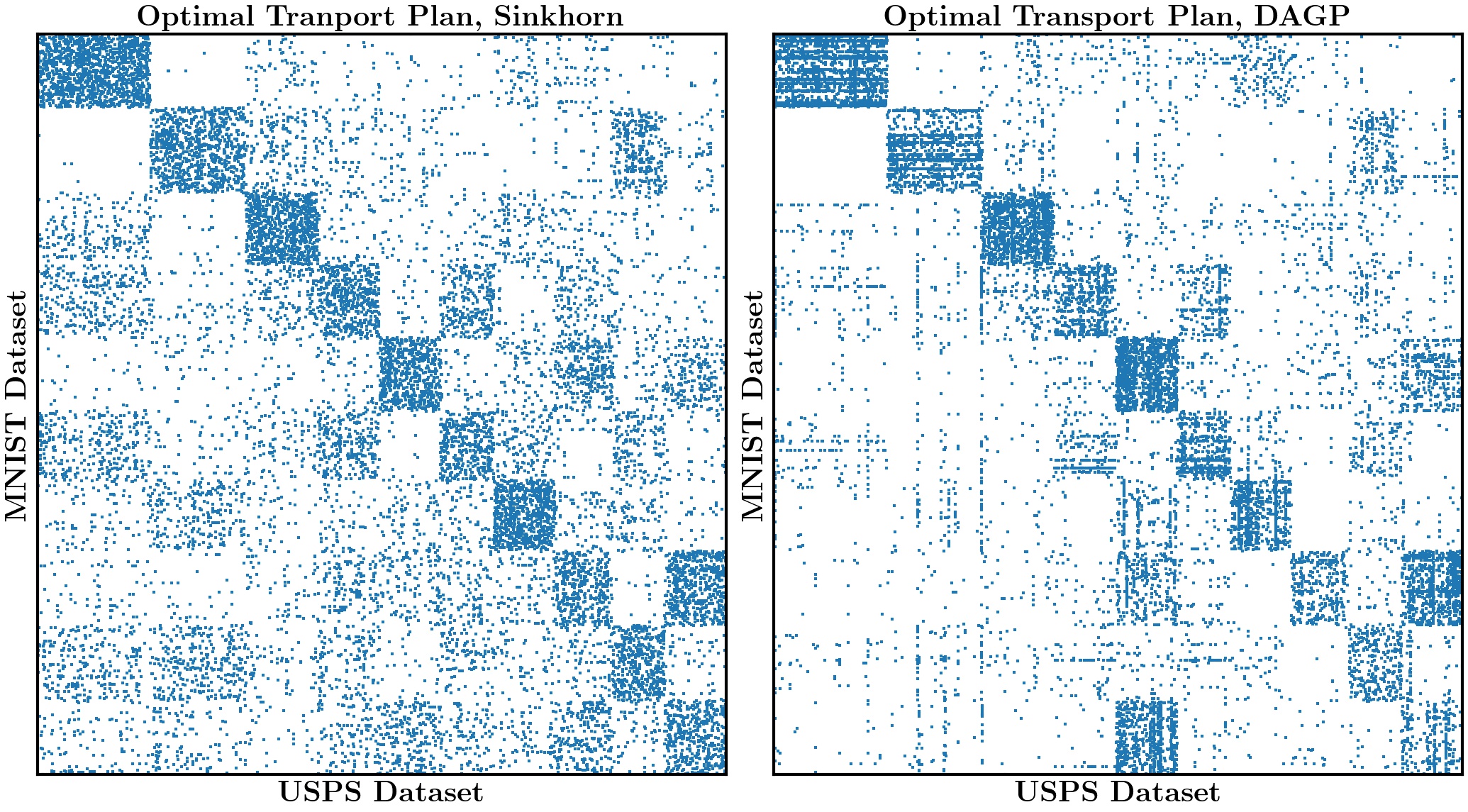}
    \caption{Optimal transport plans computed by DAGP and Sinkhorn algorithms. 
    DAGP finds a sparser transport plan since it solves the OT problem without a neg-entropy regularizer. 
    }
    \label{fig: OT}
\end{figure}

\section{Conclusion}
In this paper, we proposed a decentralized algorithmic solution to a general convex constrained optimization problem with distributed constraints by presenting the DAGP algorithm. We introduced the aggregate lower-bounding convergence analysis framework and proved that DAGP converges to a consensus optimal solution for convex and smooth objective functions over directed networks. The convergence rates were $\calO({1}/{K})$ for the feasibility gap and $\calO({1}/{\sqrt{K}})$ for the optimality gap. We showed that our proof technique could be used for analyzing a wide range of algorithms without searching for a suitable Lyapunov function.
Moreover, we presented a novel proximal backtracking algorithm for an efficient projection onto the epigraph of convex functions, which can be combined with DAGP to solve non-differentiable problems. Experimentation for this scenario is postponed to a future study. 
We presented an application of DAGP to exactly solving large-scale optimal transport problems in a decentralized manner.
Numerical results also demonstrated that DAGP outperforms DDPS, and competes with the state-of-the-art Push-Pull algorithm in the unconstrained logistic regression problem.

\appendices

\section{Proof of Lemma \ref{lem1}}\label{appendix: proofLem1}
 We start by stating several useful lemmas. Then, we present the proof of Lemma \ref{lem1}.

\begin{lemma} \label{lem: solution_form}
For given matrices  $\{\bA_n\}_{n=0}^N$ and $\{\bB_n\}_{n=0}^N$, define
\begin{equation}
\label{eq: F_lemma1}
    \bar\bF(z) = 
    \left[
     \begin{array}{cc}
     \suml_{n=0}^{N}\bA_n\;z^{n} \;& \suml_{n=0}^{N} \bB_n\;z^{n} \\
     \suml_{n=0}^{N} \bC_n\; z^{n} \;& \suml_{n=0}^{N} \bD_n\; z^{n}
      \end{array}
    \right]
\end{equation}
Take $Z \coloneqq \left\{ z\neq 0 | \det(\bar\bF(z)) = 0  \right\}$ and assume that any $ z_i \in Z$ is a simple root of $\det(\bar\bF(z))$.
Furthermore, assume that $z\bar\bF^{-1}(z)$ has a limit on the complex plane as $z\to 0$. Denote by
$\bn_i \coloneqq [ \bn_{i,\bpsi}^T \;\; \bn_{i,\blambda}^T ]^T $
the unique normalized vector spanning the null space of $\bar\bF(z_i)$%
. Then,
 any solution of the following system of linear recurrence equations for $k=0,1,2,\ldots$
\begin{equation}\label{eq: sys_recurrence}
    \begin{aligned}
    \sum_{n=0}^N \bA_n\bPsi_{k+n} + \bB_n\bLambda_{k+n}  & = \bO \\
    \sum_{n=0}^N \bC_n\bPsi_{k+n} + \bD_n\bLambda_{k+n}  & = \bO, 
    \end{aligned}
\end{equation}
 can be written as
\begin{equation}\label{eq: psi_lambda_sol}
\begin{aligned}
    \bPsi_k & = \sum_{i=1}^p \bgamma_i^T \otimes \bn_{i,\bpsi} \; z_i^k\\
    \bLambda_k & = \sum_{i=1}^p \bgamma_i^T \otimes \bn_{i,\blambda} \; z_i^k,
\end{aligned}
\end{equation}
where $p$ is the cardinality of the set $Z$,  $\bgamma_i$ is an arbitrary vector, and $\otimes$ shows the Kronecker product.  
\begin{proof}
Consider infinite-duration one-sided signals $\bPsi_k$ and $\bLambda_k$. Taking the one-sided $Z$-transform of recurrence equations in \eqref{eq: sys_recurrence}, we have
\[
\bar\bF(z) \left[ 
\begin{array}{c}
     \bPsi(z) \\
     \bLambda(z) 
\end{array}
\right]
 =z\bU(z), 
 \]
 \[
 \bU(z):=\suml_{n=0}^N\suml_{k=0}^{n-1}z^{n-k-1} \left[
     \begin{array}{cc}
     \bA_n &  \bB_n \\
      \bC_n &  \bD_n
      \end{array}
    \right]\left[ 
\begin{array}{c}
     \bPsi_k \\
     \bLambda_k 
\end{array}
\right]
 \]
where $\bar\bF(z)$ is defined in \eqref{eq: F_lemma1} and the entries of $\bU(z)$ are polynomials. By the assumptions for the null space of $\bar\bF(z)$, we conclude that there are only simple nonzero poles at $z=z_i\in Z$ in the entries of $\bar\bF(z)$ and hence by inverse Z-transform, we obtain that
\begin{equation}
\left[\begin{array}{c}
    \bPsi_k  \\
    \bLambda_k  
\end{array}\right]= \sum_{i=1}^p 
    \bu_i z_i^k
\end{equation}
where $\bu_i$ are suitable constant matrices. Replacing this expression in the recurrent equation, we obtain that
\begin{equation}
    \sum_{i=1}^p 
    \bar\bF(z_i)\bu_i z_i^k=\bzero
\end{equation}
As $z_i$ are distinct and $k$ is arbitrary, we conclude that $\bar\bF(z_i)\bu_i=\bzero$, which yields the result. 
\end{proof}
\end{lemma}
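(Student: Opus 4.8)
The statement to prove is Lemma~\ref{lem: solution_form}, characterizing all solutions of a linear constant-coefficient recurrence system via the nonzero roots of the determinant of the associated matrix polynomial $\bar\bF(z)$.

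\textbf{Overall approach.} The plan is to pass to the $Z$-transform domain, solve the recurrence as a rational matrix equation, and then read off the solution structure from the pole locations of $\bar\bF^{-1}(z)$. Concretely, I would stack $\bPsi_k$ and $\bLambda_k$ into a single block signal and treat \eqref{eq: sys_recurrence} as a single matrix recurrence $\sum_{n=0}^N \bM_n \bV_{k+n} = \bO$ with $\bM_n = \left[\begin{smallmatrix}\bA_n & \bB_n\\ \bC_n & \bD_n\end{smallmatrix}\right]$ and $\bV_k = \left[\begin{smallmatrix}\bPsi_k\\ \bLambda_k\end{smallmatrix}\right]$. Taking the one-sided $Z$-transform and using the shift rule produces $\bar\bF(z)\bV(z) = z\,\bU(z)$ where $\bU(z)$ is a polynomial (matrix) that collects the finitely many initial-condition terms. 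This identity is the backbone of the argument.

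\textbf{Key steps in order.} First I would set up the block notation and derive the transform-domain identity $\bar\bF(z)\bV(z)=z\bU(z)$, being careful with the index bookkeeping in the shift rule so that $\bU(z)$ comes out as an honest polynomial matrix (degree at most $N-1$) in $z$. Second, I would invert: $\bV(z) = z\,\bar\bF^{-1}(z)\,\bU(z)$, and analyze the singularities. By Cramer's rule, $\bar\bF^{-1}(z) = \operatorname{adj}(\bar\bF(z))/\det(\bar\bF(z))$; the numerator is polynomial, so the only possible poles of $\bV(z)$ are at zeros of $\det(\bar\bF(z))$, i.e.\ at the points of $Z\cup\{0\}$. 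The hypothesis that each $z_i\in Z$ is a simple root makes these simple poles, and the hypothesis that $z\bar\bF^{-1}(z)$ has a limit as $z\to 0$ kills the potential pole at the origin (it ensures $\bV(z)$ stays finite there, so no term proportional to a power of the unit-step/ramp survives). Third, I would do a partial-fraction expansion $\bV(z) = \sum_{i=1}^p \bu_i\,\frac{z}{z-z_i}$ for suitable constant matrices $\bu_i$, and apply the inverse $Z$-transform to get $\bV_k = \sum_{i=1}^p \bu_i z_i^k$. Fourth, substituting back into the recurrence and using that the $z_i$ are distinct and $k$ ranges over all nonnegative integers (a Vandermonde/linear-independence argument on the sequences $(z_i^k)_k$) forces $\bar\bF(z_i)\bu_i = \bO$ for each $i$. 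Finally, since $\bar\bF(z_i)$ has a one-dimensional null space spanned by the normalized vector $\bn_i = [\bn_{i,\bpsi}^T\ \bn_{i,\blambda}^T]^T$ (one-dimensionality again from simplicity of the root, together with the limit condition ruling out higher-order degeneracy), every column of $\bu_i$ is a scalar multiple of $\bn_i$; collecting the scalars column-by-column into a vector $\bgamma_i$ gives $\bu_i = \bgamma_i^T \otimes \bn_i$, which splitting into the $\bpsi$- and $\blambda$-blocks yields exactly \eqref{eq: psi_lambda_sol}.

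\textbf{Anticipated main obstacle.} The routine parts are the shift-rule bookkeeping and the partial-fraction/inverse-transform mechanics. The delicate point is justifying that the null space of $\bar\bF(z_i)$ is exactly one-dimensional and that no term at $z=0$ contributes --- i.e.\ correctly using the two analytic hypotheses (simplicity of the roots in $Z$, and existence of $\lim_{z\to 0} z\bar\bF^{-1}(z)$). Simplicity of $z_i$ as a root of $\det\bar\bF$ gives that $\operatorname{rank}\bar\bF(z_i)$ drops by exactly one (this needs a short argument: $\frac{d}{dz}\det\bar\bF(z_i)\neq 0$ together with Jacobi's formula $\frac{d}{dz}\det\bar\bF = \operatorname{tr}(\operatorname{adj}(\bar\bF)\,\bar\bF')$ forces $\operatorname{adj}(\bar\bF(z_i))\neq \bO$, hence the corank is $1$), so the null space is a line. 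I expect this rank-one-drop step, and the careful handling of the $z\to 0$ behaviour to exclude spurious constant/polynomial-in-$k$ modes, to be where the real care is needed; everything else is standard $Z$-transform manipulation.
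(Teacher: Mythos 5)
Your proposal is correct and follows essentially the same route as the paper's own proof: one-sided $Z$-transform giving $\bar\bF(z)\bV(z)=z\bU(z)$, inversion and pole analysis using the simple-root and $z\to0$ hypotheses, inverse transform to $\bV_k=\sum_i\bu_i z_i^k$, and substitution back into the recurrence to force $\bar\bF(z_i)\bu_i=\bO$ and hence the Kronecker-product form. Your added Jacobi-formula argument for the corank-one drop only makes explicit a point the paper leaves implicit.
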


\begin{lemma}\label{lem: det}
Let $z_i$ for $i=1,2\ldots,p$ be real. 
Consider a matrix $\bB$ in the following block form
\begin{equation}
    \bB=\left[\begin{array}{c}
        \bB_{1}  \\
        \hline
         \bB_{2}
    \end{array}\right],
\end{equation}
where $\bB_1,\bB_2$ are two real $m\times p$ matrices. Correspondingly, define 
matrices $\bar\bT_K$ with the following block structure
\begin{equation}
\bar\bT_K = \bB \circ
    \left[
    \begin{array}{ccc}
         z_1^{K}\bone_m & \dots & z_{p}^K\bone_m \\
         \hline
         &    \bone_{m\times p}       &
    \end{array}
    \right]
\end{equation}
where $\circ$ denotes the Hadamard  product. There exists a continuous function $ K_0 =  K_0 (\bB, z_1, z_2,\ldots, z_p) $ such that if $\bar\bT_0$ and $\bar\bT_1$ have full column rank, then  $\bT_K$ also has full column rank for $K>K_0$.
\end{lemma}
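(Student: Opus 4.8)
The plan is to convert the rank condition into the non-vanishing of a single determinant and then treat that determinant as an exponential sum in the integer $K$.

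First I would write $\bar\bT_K=\left[\begin{smallmatrix}\bB_1\bD_K\\ \bB_2\end{smallmatrix}\right]$ with $\bD_K=\mathrm{diag}(z_1^K,\dots,z_p^K)$, so that $\bar\bT_K$ has full column rank iff $p(K):=\det(\bar\bT_K^{T}\bar\bT_K)=\det(\bD_K\bM_1\bD_K+\bM_2)\neq 0$, where $\bM_1:=\bB_1^{T}\bB_1$ and $\bM_2:=\bB_2^{T}\bB_2$. The $(i,j)$ entry of $\bD_K\bM_1\bD_K+\bM_2$ equals $(z_iz_j)^{K}(\bM_1)_{ij}+(\bM_2)_{ij}$, and since the determinant is a polynomial in the entries, $p(K)=\sum_{\ell}c_\ell\,\mu_\ell^{K}$ for finitely many \emph{real} frequencies $\mu_\ell$ (products of the $z_i$'s) and coefficients $c_\ell$ that are polynomial in the entries of $\bB$. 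Moreover $p(0)=\det(\bar\bT_0^{T}\bar\bT_0)$ and $p(1)=\det(\bar\bT_1^{T}\bar\bT_1)$, so the two hypotheses say exactly $p(0)\neq 0$ and $p(1)\neq 0$. For simplicity I assume $z_i\neq 0$ for all $i$, which holds in the intended application (Lemma~\ref{lem: solution_form}).

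The heart of the argument is: if $p(0)\neq0$ and $p(1)\neq0$, then $p(K)\neq0$ for all large $K$. Suppose not; then $p$ vanishes at infinitely many even or at infinitely many odd integers. On even indices, $p(2j)=\sum_{\nu}\tilde c_\nu\,\nu^{j}$ where $\nu$ ranges over the distinct values of $\mu_\ell^{2}>0$ and $\tilde c_\nu:=\sum_{\mu_\ell^2=\nu}c_\ell$; a nonzero linear combination of geometric sequences with distinct positive ratios cannot vanish at infinitely many $j$ (the term with the largest ratio and a nonzero coefficient dominates), so all $\tilde c_\nu=0$, whence $p(0)=\sum_\nu\tilde c_\nu=0$, a contradiction. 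On odd indices, $p(2j+1)=\sum_\nu\hat c_\nu\,\nu^{j}$ with $\hat c_\nu:=\sum_{\mu_\ell^2=\nu}c_\ell\mu_\ell$, and the same reasoning gives $p(1)=\sum_\nu\hat c_\nu=0$, again a contradiction. Both hypotheses are genuinely used: with $z_1=1,z_2=-1$ one can make $\bar\bT_0$ have full column rank while $\bar\bT_K$ is rank-deficient on every odd $K$. Hence the zero set of $p$ is finite.

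It remains to exhibit the threshold as a \emph{continuous} function $K_0(\bB,z_1,\dots,z_p)$, which is the delicate point. Quantitatively, letting $\nu^{*}$ be the largest $\nu$ with $\tilde c_\nu\neq0$ and $\nu'$ the next one, the bound $|p(2j)|\geq|\tilde c_{\nu^*}|(\nu^*)^{j}-\sum_{\nu\neq\nu^*}|\tilde c_\nu|\,\nu^{j}$ shows $p(2j)\neq0$ once $j>\log\!\big(\sum_\nu|\tilde c_\nu|/|\tilde c_{\nu^*}|\big)/\log(\nu^*/\nu')$, and symmetrically on odd indices; the larger of the two is an explicit threshold, continuous as long as the leading coefficients stay bounded away from $0$. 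To obtain a \emph{globally} continuous $K_0$ I would show that the largest zero $\kappa(\bB,z)$ of $p$ is locally bounded --- via a multiscale refinement that peels off successive frequency levels and invokes $p(0),p(1)\neq0$ at the bottom level to exclude the degenerate cases --- and then take $K_0$ to be a continuous majorant of $\kappa$ built by a partition of unity, with $K_0\equiv 0$ where the hypotheses fail. I expect this promotion of the pointwise finiteness of the zero set to a continuous (not merely finite-valued) bound to be the main obstacle; everything preceding it is a routine exponential-sum computation.
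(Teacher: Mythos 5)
Your proposal is correct in substance and, at its core, runs along the same lines as the paper's proof: both reduce full column rank of $\bar\bT_K$ to the non-vanishing of a scalar function of $K$ that is a finite sum of geometric terms, split $K$ by parity so the ratios become positive, and then use the largest ratio carrying a nonzero grouped coefficient, anchored by the hypotheses at $K=0$ and $K=1$, to obtain an explicit threshold (your bound is the analogue of \eqref{eq: k_bound}). The genuine difference is the reduction itself: the paper appends $2m-p$ constant columns to make the matrix square (one augmentation chosen so $\tilde\bT_0$ is nonsingular for even $K$, another so $\tilde\bT_1$ is nonsingular for odd $K$) and expands $\det(\tilde\bT_K)$ by the Leibniz formula, whereas you work with the Gram determinant $p(K)=\det(\bar\bT_K^T\bar\bT_K)=\det\bigl(\bD\,\bB_1^T\bB_1\,\bD+\bB_2^T\bB_2\bigr)$ with $\bD=\mathrm{diag}(z_1^K,\dots,z_p^K)$. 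Your route is slightly cleaner: it yields a single exponential sum valid for all $K$, avoids the choice of augmenting columns and the $2m\geq p$ bookkeeping, makes the roles of the two hypotheses transparent as $p(0)\neq 0$ and $p(1)\neq 0$, and your $z_1=1$, $z_2=-1$ example showing both anchors are needed is a nice addition; the qualitative step (finitely many integer zeros) and the quantitative threshold are otherwise the same dominant-term argument as in the paper. On the point you flag as the main obstacle, the continuity of $K_0$ in $(\bB,z_1,\dots,z_p)$, you are no worse off than the paper: its threshold \eqref{eq: k_bound} depends on which grouped coefficient is the largest nonzero one, a selection that is not continuous, and the paper simply asserts that continuity follows from the construction. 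Note also that for the only use of this lemma, in Lemma~\ref{lem: zero_sol_sys}, a threshold that is locally bounded along the (compactified) family $\beta\in(0,\infty]$ already suffices to extract the uniform bound $\bar K$, and your quantitative even/odd bounds provide this wherever the relevant leading grouped coefficients stay bounded away from zero; so flagging the issue and supplying a local-boundedness argument would be an acceptable substitute for the elaborate partition-of-unity construction you sketch.
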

\begin{proof}
We divide the case of odd and even $K$. For even $K$, i.e. $K=2K'$, note that since $\bar\bT_0$ has full column rank, $2m\geq p$ and we may append suitable $2m-p$ columns to $\bar\bT_{K}$ to obtain a square $2m\times 2m$ matrix $\tilde\bT_K$ such that $\tilde\bT_0$ is full rank.
We show that the determinant of  $\tilde\bT_K$ for sufficient large even $K$ cannot be zero, which also implies that $\bar\bT_K$ is full-column-rank. 
Remember the general formula of determinant as
\begin{equation}\label{eq:det}
    \det(\tilde\bT_K) = \suml_{(2m)! \; \text{terms}} \pm\;  \tilde t_{1j_1}\tilde t_{2j_2}\dots \tilde t_{(2m)j_{2m}}, 
\end{equation}
where $(j_1,j_2,\ldots,j_{2m})$ is any permutation of $(1,2,\ldots,2m)$. Hence, all the terms in the summation can be written as a coefficient multiplied by $(z_{j_1}z_{j_2}\dots z_{j_{m}})^{2K'}$:
\begin{equation}
    \det(\tilde\bT_K)=\suml_{l}b_lZ_l^{K'}
\end{equation}
where each $Z_l$ denotes a unique positive value of the combination $(z_{j_1}z_{j_2}\dots z_{j_{m}})^{2}$.
Note that by the assumption, it holds 
that 
$\det(\tilde\bT_0)=\sum b_l\neq 0$. Hence, at least one of the $b_l$s is nonzero. With an abuse of notation,  take $Z_1$ the largest $Z_l$ with a nonzero $b_l$.
If this is the only nonzero term, the result holds for every $K'\geq 0$. If not, denote by $Z_2$ the second largest combination with a nonzero coefficient. Note that  $Z_2<Z_1$. Moreover, by the triangle inequality, the sum of all terms in \eqref{eq:det} except $b_1Z_1^K$  is bounded by $\left|
    b_2Z_2^K
    \right| 
    \left[\binom{2m}{m}-1\right]$ and hence
\begin{equation}
    |\det(\bar\bT)|\geq \left| 
    b_1Z_1^K 
    \right| 
    -
    \left|
    b_2Z_2^K
    \right| 
    \left[\binom{2m}{m}-1\right].
\end{equation}
We note that the right-hand side is strictly positive for a sufficiently large $K'$
\begin{equation}\label{eq: k_bound}
K'> 
\frac{ \log{\left|\frac{b_2}{b_1}\right|} + m\log{2} }  {\log{\left|\frac{Z_1}{Z_2}\right|}}.
\end{equation}
This completes the proof for even $K$. For odd $K=2K'+1$, append $\bar\bT_K$ with proper columns such that $\bar\bT_1$ is full rank. Again, the determinant can be written as 
\begin{equation}
    \det(\tilde\bT_K)=\suml_{l}b'_lZ_l^{K'}
\end{equation}
where $Z_l$ are as before, but the coefficients $b'_l$ are different. However, from the assumption $\det(\bar\bT_1)=\sum_lb'_l\neq 0$ and the rest of the argument for even $K$ holds again. The continuity of $K'$ follows the construction.
\end{proof}

\begin{lemma}\label{lem: zero_sol_sys}
Let Assumption \ref{ass: big_assumption} holds for some $\bR,\bS$ and $\bP$ matrices.
Then, for $C>-\lambda_{\mathrm{min}}(\bS)$  and a sufficiently large $K$, the following system of linear recurrent equations
\begin{equation}\label{eq: sys_Time_period}
    \begin{aligned}
        \bPsi_{k+1} - \bR\bPsi_k - \frac{1}{\beta}\bP\bP^T\bLambda_k  = \bO & \qquad  k = 0,\ldots,K-2\\
        \bS\bPsi_{k+1} - \bR^T\bLambda_{k+1} + \bLambda_k  = \bO & \qquad  k=0,\ldots,K-2        
    \end{aligned}
\end{equation}
with the following boundary conditions
\begin{gather}\label{eq: boundary_cond}
\begin{aligned}
    &\bLambda_{K-1}  = \bO \\
    &\left(\bS + (C+\beta)\bI\right)\bPsi_0 - \bR^T\bLambda_0  = \bO
\end{aligned}
\end{gather}
has no non-zero solution for $\bPsi_k$ and $\bLambda_k$ and every $\beta$. 
\end{lemma} 
\begin{proof}

The general solution of the system defined in \eqref{eq: sys_Time_period} is in the form of \eqref{eq: psi_lambda_sol} by Lemma \ref{lem: solution_form}. 
This solution should satisfy the boundary conditions. By replacing
\begin{gather}\label{eq: boundary}
    \bLambda_0 = \sum_{i=1}^p  \bgamma_i^T\otimes \bn_{i,\blambda}, \qquad
    \bPsi_0    = \sum_{i=1}^p  \bgamma_i^T\otimes \bn_{i,\bpsi}, \\
    \bLambda_{K-1} = \sum_{i=1}^p \bgamma_i^T\otimes \bn_{i,\blambda}\;z_i^{K-1},
\end{gather}
in \eqref{eq: boundary_cond}, we have the following system of equations to find $\bgamma_i$s.
\begin{equation}\label{eq: sys_gamma}
\underbrace{
    \left[
    \begin{array}{ccc}
         \dots & \bn_{i,\blambda}z_i^{K-1} & \dots  \\
         \dots & (\bS+(C+\beta)\bI)\bn_{i,\bpsi}-\bR^T\bn_{i,\blambda} & \dots 
    \end{array}
    \right]}_{\bT_\beta} \left[
    \begin{array}{c}
           \vdots \\
           \bgamma_{i}^T \\
           \vdots
    \end{array}
    \right] = \bO
\end{equation}
Note that by the assumptions, $\bT_0=\lim_{\beta\to 0}\bT_{\beta}$ exists. Furthermore,
for $C\geq -\lambda_{\min}(\bS)$ and  every $\beta >0$, $\bS + (C+\beta)\bI$ is full-rank. 
Then, matrix $\bT_\beta$ satisfies the conditions of Lemma \ref{lem: det} for every $\beta\in (0\ \infty]$. Hence, it has non-zero determinant for  $K>K_{\beta}$ where $K_{\beta}$ is a continuous function of $\beta$. Since $K_0,K_\infty$ are finite, we conclude that $K_\beta$ has a finite upper bound $\bar{K}$ and for $K>\bar{K}$,  $\bgamma_i = \bzero$, for all $i$. 
\end{proof}

\begin{lemma}
    Let the conditions of Lemma \ref{lem: zero_sol_sys} are satisfied for some known matrices $\bR,\bS$ and $\bP$. Then,
    there exists a $C\geq0$ such that for every $\beta>0$, the following system of linear recurrences has no non-zero solution for $\{\bPsi_k,\bLambda_k,\tlbX_{k+2}\}$, considering the boundary conditions $\bLambda_{-1} = \bLambda_{K-1} = \bO$.
    \begin{align}
    \bLambda_{k-1}-\bR^T\bLambda_k+(\bS+\left(C+\beta\right)\delta_{k,0}\bI)\bPsi_{k}=\bO  & \qquad k=0,1,\ldots,K-1 \label{eq:10}  \\
    \bPsi_{k+1} - \bR\bPsi_k - \bP\tilde\bX_{k+2} =\bO  &\qquad k=0,1,\ldots,K-2  \label{eq:44}\\
    \bP^T\bLambda_k - \beta\tlbX_{k+2} = \bO &\qquad  k=0,1,\ldots, K-2 \label{eq:13}  
    \end{align}
\end{lemma}

\begin{proof}
    The boundary condition $\bLambda_{-1}=\bO$ is equivalent to $(\bS+\left(C+\beta\right)\bI)\bPsi_{0}-\bR^T\bLambda_0=\bO$, together with starting the recursive equation \eqref{eq:10} from $k=1$. Then, \eqref{eq:10} can be rewritten as
    \begin{equation}
        \bLambda_{k}-\bR^T\bLambda_{k+1}+\bS\bPsi_{k+1}=\bO.  \qquad k=0,1,\ldots,K-2 \nonumber
    \end{equation}
    Moreover, one can eliminate $\tilde\bX_{k+2}$ from the system, by calculating it in \eqref{eq:13} and replacing it in the other relations. This will lead to the system of recurrences presented in \eqref{eq: sys_Time_period}, with the same boundary conditions as \eqref{eq: boundary_cond}. Then, the proof follows Lemma \ref{lem: zero_sol_sys}.
\end{proof}

Now, we present the proof of Lemma \ref{lem1}. Note that the claim is equivalent to the statement that zero is the optimal value for the optimization problem
    \begin{alignat}{2}
        & \minl_{\substack{\{\bPsi_k\}_{k=0}^{K-1}, \\ \{\tlbX_{k+2}\}_{k=0}^{K-2}}} && \quad \frac{1}{2}\suml_{k=0}^{K-1}\langle\bPsi_k,\bS\bPsi_k\rangle+\frac{C}{2}\|\bPsi_0\|^2_{\mathrm{F}} \nonumber\\
       &\qquad \sbjt && \quad \bPsi_{k+1}=\bR\bPsi_k+\bP\tlbX_{k+2}.  
           \qquad k=0,1,\ldots, K-2 \label{eq: lower_bounding_probelm}
    \end{alignat}
    If the claim does not hold, the optimization is unbounded and the following restricted optimization will achieve a strictly negative optimal value at a non-zero solution.
    \begin{alignat}{2}
        &\minl_{\substack{\{\bPsi_k\}_{k=0}^{K-1}, \\ \{\tlbX_{k+2}\}_{k=0}^{K-2}}} && \quad \frac{1}{2}\suml_{k=0}^{K-1}\langle\bPsi_k,\bS\bPsi_k\rangle+\frac{C}{2}\|\bPsi_0\|^2_{\mathrm{F}} \nonumber \\
        &\qquad \qquad\sbjt &&  \quad 
           \bPsi_{k+1}=\bR\bPsi_k+\bP\tlbX_{k+2}, \qquad k=0,1,\ldots, K-2 \nonumber \\
        & && \quad \frac{1}{2}\nf{\bPsi_0}^2 + \frac{1}{2}\suml_{k=0}^{K-2}\|\tlbX_{k+2}\|_\mathrm{F}^2 \leq \frac{1}{2}. \nonumber
    \end{alignat}
    Such a solution satisfies the KKT condition,  which coincides with \eqref{eq:10}, \eqref{eq:13} where $\{\bLambda_k\},\beta\geq 0$ are dual (Lagrangian) multipliers corresponding to the constraints. We also observe that the optimal value at this point is given by $-\beta\left( \nf{\bPsi_0}^2 + \sum_{k=0}^{K-2}\|\tlbX_{k+2}\|_\mathrm{F}^2\right)$. This shows that $\beta>0$. This contradicts the assumption that such a point does not exist and completes the proof.

Note that the above result resembles the standard quadratic control setup but is different from it as it is not assumed that $\bS$ is a positive-semi-definite matrix.

\section{Epigraph Projection Operators}\label{appendix: EPO}
In this Appendix, we show that the problem of evaluating the EPO of a convex function $f$ can always be reduced to calculating the underlying proximal operator of $f$, together with calculating an adaptive step-size:
\begin{theorem}\label{thm:reduction}
For every convex lower semi-continuous (LSC)\footnote{This assumption does not restrict  optimization problems. With this assumption, the EPO of $f$ denoted by $\mathrm{P}_{\epi(f)}(\bx,t)$ is the orthogonal projection operator onto $\epi(f)$, which always exists and is unique.  } function $f$, we have
\[
\mathrm{P}_{\epi(f)}(\bx,t)=(\prox_{\tau f}(\bx), t+\tau),
\]
where $\tau$ is either the unique non-negative solution of the equation $f(\prox_{\tau f}(\bx))=t+\tau$, if $t<f(\bx)$, or $\tau=0$, otherwise. Equivalently, $\tau$ is the unique solution of the following concave optimization problem:
\begin{equation}\label{eq:backtracking}
\maxl_{\tau\geq 0}\;\;\left(\tlf^\tau(\bx)-t\right)\tau-\frac 1 2\tau^2,
\end{equation}
where $\tlf^\tau$  is the  Moreau envelope of $f$ with parameter $\tau$.
\end{theorem}
\begin{proof}
By definition, we have
\begin{alignat}{3}
&\mathrm{P}_{\epi(f)}(\bx,t)= && \argmin\limits_{\bx^\prime,t^\prime} && \;\; \frac 1 2\|\bx-\bx^\prime\|^2+\frac 1 2(t-t^\prime)^2 \nonumber\\
& && \qquad\; \sbjt &&\quad t^\prime\geq f(\bx^\prime) \label{eq:primal}
\end{alignat}
Since the epigraph is closed and convex, the solution exists and is unique. Now, consider the following Lagrange dual form
\begin{eqnarray}\label{eq:dual}
\max_{\tau\geq 0}\;\;\minl_{\bx^\prime,t^\prime} \;\; \frac 1 2\|\bx-\bx^\prime\|^2+\frac 1 2(t-t^\prime)^2+\tau\left(f(\bx^\prime)-t^\prime\right).
\end{eqnarray}
The solution to the inner optimization is given by $\bx^\prime=\prox_{\tau f}(\bx),\ t^\prime=t+\tau$. By replacement, we obtain \eqref{eq:backtracking} as the outer optimization, which has the unique solution $\tau$, satisfying the conditions of Theorem 2. Finally, since $\tau, (\bx^\prime,t^\prime)$ are the unique solutions of the dual form in \eqref{eq:dual}, they are the solution of \eqref{eq:primal}, by the saddle point optimality theorem. 
\end{proof}

As a remark, \cite{chierchia2015epigraphical } and \cite[chapter~6.6.2]{parikh2014proximal} have introduced other types of reduction for EPOs that involve proximal operators. These reductions are restricted to  solving complex equations or computing the proximal operator of more complicated functions than the underlying objective functions $f^v$. 
For this reason, we observe that our reduction scheme can be applied more conveniently as it merely involves the proximal operator of the objective term.
In particular, Theorem~\ref{thm:reduction}  suggests a proximal backtracking scheme for implementing EPOs by applying the gradient ascent algorithm to solve the optimization problem in \eqref{eq:backtracking}.  We present the resulting procedure in Algorithm~\ref{alg:backtracking}. We note that since the objective function in \eqref{eq:backtracking} is 1-strongly concave, any choice of a step-size $\mu\in (0,1)$ leads to linear  convergence. Due to strong concavity, more elaborate optimization schemes can also be considered, which are left for future studies. 
\begin{algorithm}
\begin{algorithmic}
\STATE {\bf input:} $(\bx,t)$ and a
convex LSC function $f$ 
\IF{$t\geq f(\bx)$}
    \STATE Return $(\bx^\dagger,t^\dagger)=(\bx,t)$
\ELSE
\STATE {\bf set:} $\mu\in (0,1)$ 
\STATE {\bf initialize:} $\tau\geq 0$ 
\REPEAT
\STATE Set $(\bx^\dagger,t^\dagger)=(\prox_{\tau f}(\bx),t+\tau)$
\STATE Update $\tau$ to $\tau+\mu(f(\bx^\dagger)-t^\dagger)$
\UNTIL{convergence}
\ENDIF
\STATE Return $(\bx^\dagger,t^\dagger)$
\end{algorithmic}
\caption{Epigraph Projection by Proximal Backtracking}
\label{alg:backtracking}
\end{algorithm}

\bibliographystyle{IEEEtran}
\bibliography{arxiv.bib}




\end{document}